\documentclass[11pt,reqno]{amsart}
\usepackage[T1]{fontenc}
\usepackage{lmodern,microtype,amsmath,amssymb,mathtools}
\usepackage[a4paper,margin=29mm]{geometry}
\usepackage[colorlinks=true,linkcolor=blue!45!black,citecolor=blue!45!black,urlcolor=blue!45!black]{hyperref}
\usepackage{xcolor}
\newtheorem{theorem}{Theorem}[section]
\newtheorem{proposition}[theorem]{Proposition}
\newtheorem{lemma}[theorem]{Lemma}
\newtheorem{corollary}[theorem]{Corollary}
\theoremstyle{definition}

\theoremstyle{remark}
\newtheorem{remark}[theorem]{Remark}
\newcommand{\T}{\mathbb T}

\newcommand{\Z}{\mathbb Z}
\newcommand{\C}{\mathbb C}
\newcommand{\B}{\mathcal B}
\newcommand{\D}{\mathcal D}
\newcommand{\G}{\mathcal G}
\newcommand{\I}{\mathcal I}
\newcommand{\DP}{\operatorname{DP}}
\newcommand{\Id}{\operatorname{Id}}
\newcommand{\supp}{\operatorname{supp}}
\newcommand{\dist}{\operatorname{dist}}
\newcommand{\clspan}{\overline{\operatorname{span}}}

\newcommand{\restr}[2]{#1\!\upharpoonright_{#2}}
\title[Large ideals in $\B(L^1(0,1))$]{Large ideals in $\B(L^1(0,1))$}
\author{Amir Nasseri}
\date{September 16, 2026}
\subjclass[2020]{Primary 47L20; Secondary 43A10, 46E30, 46H10, 47B65.}
\keywords{Dunford--Pettis operator, approximate identity, closed operator ideal, Rajchman measure, strongly independent set}
\begin{document}
\begin{abstract}
We show that the norm-closed linear span of products of two Dunford--Pettis operators on $L^1(0,1)$ lies strictly between the representable and the Dunford--Pettis operators. This gives an additional proper large closed ideal in $\B(L^1(0,1))$, answering the existence question of Johnson, Pisier and Schechtman. The quotient of the Dunford--Pettis ideal by this square ideal contains a contractively complemented isometric copy of $L^1(0,1)$. We also prove that the Dunford--Pettis ideal has no right approximate identity, answering a question of Johnson and Schechtman. More precisely, a positive norm-one convolution operator $R$ satisfies $\|R-RT\|\geq1$ and $\|R-TR\|\geq1$ for every Dunford--Pettis operator $T$. The proofs combine translation averaging, Rajchman measures supported on a strongly independent set, and two atomless disintegrations of positive operators. The approximate-identity obstruction extends to every finite measure space with a nonzero atomless part. Finally, we prove that quantitative multiplication estimates for an ideal pass to all its closed power ideals.
\end{abstract}
\maketitle

\section{Introduction and main results}

Johnson and Schechtman \cite{JS} studied one-sided approximate identities in closed ideals of $\B(L^p[0,1])$. They proved that the compact operators form the only nonzero proper closed ideal with a left approximate identity. For $p=1$, they also proved that the representable operators have a contractive right approximate identity. At the beginning of Section~3 of their paper they asked whether the Dunford--Pettis ideal has a right approximate identity and proposed a negative answer. We prove that negative answer, with an exact norm obstruction for one fixed operator.

Throughout the paper the scalar field is $\C$, $\T=\mathbb R/\mathbb Z$, and $m$ is normalized Haar measure. We work first on $X=L^1(\T,m)$, which is isometrically isomorphic to $L^1[0,1]$. An operator is \emph{Dunford--Pettis} if it maps weakly null sequences to norm-null sequences. Write
\[
 \D=\DP(X),\qquad
 \G=\G_{\ell^1}(X),\qquad
 \I_2=\clspan\{ST:S,T\in\D\}.
\]
Here $\G$ is the closed ideal of representable operators; on this space it is also the ideal of operators factoring through $\ell^1$. In particular, $\G\subseteq\D$. All operator-ideal closures and quotient norms use the operator norm; measure spaces carry the total variation norm. A right approximate identity in a Banach algebra $J$ is a net $(e_\alpha)\subseteq J$ for which $\|ae_\alpha-a\|\to0$ for every $a\in J$. No boundedness of the net is part of this definition.

\begin{theorem}\label{thm:ai}
There is a positive operator $R\in\D$ with $\|R\|=1$ such that
\begin{equation}\label{eq:obstruction}
 \|R-RT\|\geq1,\qquad \|R-TR\|\geq1\qquad(T\in\D).
\end{equation}
Consequently, $\DP(L^1[0,1])$ has neither a right nor a left approximate identity, with no boundedness assumption.
\end{theorem}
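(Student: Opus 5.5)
We take $R$ to be convolution by a positive Rajchman probability measure $\mu$ on $\T$, $R f=\mu*f$, with $\mu$ supported on a compact strongly independent set $K$ (so $\mu$ is singular and continuous). Since $\|\mu\|=1$ and $\mu\ge0$, $R$ is positive with $\|R\|=1$. To see $R\in\D$: a weakly null sequence $(f_n)$ in $L^1$ lies, up to small error, in a set of uniformly integrable functions. Because $\hat\mu(k)\to0$, the operator $R$ maps weakly compact sets into norm compact sets; concretely, $R$ is a norm limit of $\mu_N*\cdot$ with $\mu_N$ a trigonometric-polynomial smoothing on the relevant uniformly integrable piece. This is the Rajchman part of the construction, and we expect it to be routine.

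For the obstruction $\|R-RT\|\ge1$ we use translation averaging. Let $\tau_s$ denote translation, and for $T\in\D$ put $\tilde T=\int\tau_{-s}T\tau_s\,ds$ (defined via the strong operator topology). Since $R$ commutes with translations and translations are isometries, $\|R-R\tilde T\|\le\|R-RT\|$, and $\tilde T$ is again Dunford--Pettis. The plan relies on the averaged operator being a convolution by a measure $\nu$ (translation-invariant operators on $L^1$ are measure convolutions), with the Dunford--Pettis property forcing $\nu$ to be "small" on $K$-type sets. We then have to show
\[
\|\mu-\mu*\nu\|\ge1 .
\]
Because $\mu\perp$ everything absolutely continuous and translates of $K$ are handled by strong independence, the natural route is to split $\nu=\nu_a+\nu_s$ and to show that $\mu*\nu$ is singular to $\mu$ whenever $\nu$ has no atom at $0$. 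Then $\|\mu-\mu*\nu\|=\|\mu\|+\|\mu*\nu\|\ge1$. If $\nu$ has an atom $c\delta_0$ with $c\neq0$, then $\tilde T$ contains $c\,\Id$ plus a remainder, which contradicts the Dunford--Pettis property since $\Id$ is not DP on $L^1$. The key point is that strong independence of $K$ makes $\mu*\nu(K)$ vanish unless $\nu$ charges $\{0\}$. Here we disintegrate the positive operators $|T|$ into kernels $x\mapsto\lambda_x$ and use the atomless disintegration from the abstract to see that the part of $\tilde T$ that charges $0$ is a multiple of the identity.

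The left inequality $\|R-TR\|\ge1$ is symmetric: we average in the same way and apply the same measure computation, or pass to adjoints on $L^\infty$ and use the second disintegration. The consequence follows formally. If $(e_\alpha)$ were a right approximate identity, then $\|R-Re_\alpha\|\to0$, contradicting the first inequality; the left case is identical. Finally, $L^1(\T)\cong L^1[0,1]$ isometrically, preserving $\D$.

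The main obstacle is the averaging step. Dunford--Pettis operators are not closed under strong-operator limits, so it is unclear that $\tilde T$ is DP or that its convolution measure has the required singularity. We would try to avoid needing $\tilde T\in\D$ altogether. Instead, we would test $R-RT$ directly on the functions $f_n=$ normalized approximate identities concentrated near translates. The DP property kills $T$ on weakly null differences, while $\mu*f_n$ stays mass-$1$ and essentially disjoint from $\mu*Tf_n$ by strong independence.
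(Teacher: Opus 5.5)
Your strategy is the paper's: average over translations, identify $\tilde T$ (the paper's $E(T)$) with $C_\nu$, use $\|R-RT\|\ge\|E(R-RT)\|=\|\mu-\mu*\nu\|$, and exploit strong independence. Your measure computation is correct and slightly sharper than Lemma~\ref{lem:polar}. Since $\mu$ is atomless and concentrated on $K$, and $K\cap(K-x)$ is finite for $x\neq0$, you get $(\mu*|\nu|)(K)=\int\mu(K-x)\,d|\nu|(x)=|\nu|(\{0\})$. So $\nu(\{0\})=0$ already gives $\mu*\nu\perp\mu$.

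But $\nu(\{0\})=0$ is exactly the step you do not establish. Your reason for $c=0$ is invalid. Writing $\tilde T=c\,\Id+C_{\nu-c\delta_0}$ gives no contradiction unless the remainder is known to be Dunford--Pettis (compare $0=\Id+(-\Id)$), and proving that would be circular. In your last paragraph you then doubt that $\tilde T\in\D$ at all and propose to bypass it, but the fallback is not an argument. On weakly null sequences, $Rf_n$ and $RTf_n$ both tend to $0$, because $R$ and $RT$ are themselves Dunford--Pettis. Approximate identities concentrated at points are not uniformly integrable, so the Dunford--Pettis property of $T$ says nothing about $Tf_n$ for them. The kernel disintegrations you invoke play no role here; the paper needs them only for the square ideal.

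The missing step is short:
\begin{itemize}
\item If $f_n\rightharpoonup0$, then $\tau_sf_n\rightharpoonup0$ for each $s$. Hence $\|T\tau_sf_n\|_1\to0$ for every $s$, with the uniform bound $\|T\|\sup_n\|f_n\|_1$. Dominated convergence gives $\|\tilde Tf_n\|_1\le\int\|T\tau_sf_n\|_1\,ds\to0$; this is Proposition~\ref{prop:average}.
\item Testing $C_\nu$ on the weakly null characters $e_n$ gives $\widehat\nu(n)\to0$. Then $\nu(\{0\})=\lim_N(2N+1)^{-1}\sum_{|n|\le N}\widehat\nu(n)=0$. More generally, Rajchman measures are atomless (Lemma~\ref{lem:rajchman}).
\item You can even skip proving $\tilde T\in\D$. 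A direct computation gives $\widehat\nu(n)=\int\overline{e_n}\,Te_n\,dm$, so $|\widehat\nu(n)|\le\|Te_n\|_1\to0$.
\end{itemize}
The left inequality then follows the same way from $E(TR)=E(T)R=C_{\nu*\mu}$; no adjoints are needed.
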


The assertion concerning left approximate identities was already known from \cite[Theorem~2.1]{JS}. The new application to their question concerns the right side. The same convolution operator witnesses both estimates in \eqref{eq:obstruction}.

For a finite complex Borel measure $\mu$ on $\T$, let
\[
 C_\mu f(x)=\int_\T f(x-y)\,d\mu(y),\qquad
 \widehat\mu(n)=\int_\T e^{-2\pi int}\,d\mu(t).
\]
The Rajchman algebra $M_0(\T)$ consists of measures for which $\widehat\mu(n)\to0$ as $|n|\to\infty$. For a compact set $P$, put
\[
 M_0(P)=\{\mu\in M_0(\T):\supp\mu\subseteq P\}.
\]

\begin{theorem}\label{thm:square}
There is a compact perfect strongly independent set $P\subseteq\T$ supporting a Rajchman probability measure such that
\begin{equation}\label{eq:exact-distance}
 \dist(C_\mu,\I_2)=\|\mu\|\qquad(\mu\in M_0(P)).
\end{equation}
Moreover,
\begin{equation}\label{eq:strict}
 \G\subsetneq\I_2\subsetneq\D.
\end{equation}
The map $\mu\mapsto C_\mu+\I_2$ embeds $M_0(P)$ isometrically as a contractively complemented subspace of $\D/\I_2$. This quotient also contains a contractively complemented isometric copy of $L^1[0,1]$.
\end{theorem}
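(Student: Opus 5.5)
The lower bound $\dist(C_\mu,\I_2)\ge\|\mu\|$ is the heart of the matter; the reverse inequality is trivial, since $\|C_\mu\|=\|\mu\|$ for convolution on $L^1(\T)$. Everything else is derived from this bound.

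The first step is to check that $C_\mu\in\D$ whenever $\mu\in M_0(\T)$. For $\mu$ absolutely continuous this is classical. For a general Rajchman measure, a weakly null sequence $f_k$ in $L^1$ is equi-integrable, so it has uniformly small mass on small sets and its Fourier coefficients tend to $0$ uniformly on bounded frequency ranges. Combining this with $\widehat\mu(n)\to0$ should show that $\mu*f_k\to0$ in norm. The natural route is to approximate $\mu$ by $\mu*F_N$ (Fejér), and to check that the error $C_{\mu-\mu*F_N}$ is small on equi-integrable sets. This uses a known characterization: a convolution operator $C_\mu$ is Dunford--Pettis iff $\mu\in M_0$. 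I will assume this characterization is standard.

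The main step is a \emph{translation averaging} reduction. Given $S\in\I_2$, one averages $\tau_{-t}S\tau_t$ over $t\in\T$. This produces a convolution-type operator, and averaging does not increase the distance $\|C_\mu-S\|$ because $C_\mu$ commutes with translations. The difficulty is that averaging does not obviously preserve $\I_2$. What I would show instead is that the translation-invariant part of any product $AB$ of two Dunford--Pettis operators is a convolution by a measure with a specific structure. Via the two atomless disintegrations of positive operators mentioned in the abstract, a Dunford--Pettis operator $A$ has a kernel representation $x\mapsto\nu_x$ whose measures are ``diffuse in a Fourier sense''. A product then gives averaged measures of the form $\int\nu_x*\rho\,dx$, where $\rho$ ranges over such families. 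The goal is to show that these averaged measures lie in the closed span of $L^1$ and of measures $\sigma*\tau$ with $\sigma,\tau\in M_0$, and that such measures annihilate $P$ in the following sense. Because $P$ is strongly independent, for $\sigma,\tau$ Rajchman the convolution $\sigma*\tau$ gives zero mass to $P$, and more strongly is mutually singular with every measure on $P$. The structure theorem for independent sets (Hewitt--Kakutani type) gives exactly this, since sums $x+y$ landing in $P$ force coincidences of measure zero under continuous measures.

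Granting this, for $S\in\I_2$ with averaged measure $\lambda\perp M(P)$ we get
\[
\|C_\mu-S\|\ \ge\ \|\mu-\lambda\|\ \ge\ \|\mu\|,
\]
using that convolution norm is at least the measure norm and that $\mu\perp\lambda$. The main obstacle is making this ``averaging lands in measures singular to $M(P)$'' rigorous for arbitrary, nonpositive products. I would first treat positive $A,B$ via disintegration, then pass to general ones by writing complex operators on $L^1$ as combinations of regular ones. This uses that $L^1$ is an AL-space, where every operator is regular with $\||T|\|=\|T\|$. Existence of $P$ (perfect, strongly independent, carrying a Rajchman probability measure) is classical (Körner/Kaufman), and I will cite it.

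The remaining claims follow from the distance bound.
\begin{itemize}
\item $\I_2\ne\D$: take $\mu$ the Rajchman probability measure on $P$.
\item $\G\subsetneq\I_2$: $\G\subseteq\I_2$ because representable operators factor through $\ell^1$, so every $G\in\G$ is a product of two representable (hence Dunford--Pettis) operators. For strictness, use $C_{\mu*\mu}\in\I_2\setminus\G$, since a convolution by a singular measure is not representable.
\item Isometric embedding of $M_0(P)$: this is exactly the distance formula.
\item Contractive complementation: compose the averaging map $T\mapsto$ (averaged measure), restricted to $P$, with the projection $\lambda\mapsto\lambda|_P$. This map kills $\I_2$ and is contractive.
\item The copy of $L^1[0,1]$: $M_0(P)$ contains the $L^1$-space of a continuous Rajchman measure $\mu$ on $P$, i.e.\ measures $f\,d\mu$. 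This is contractively complemented in $M(P)$ by the Lebesgue decomposition, and it is isometric to $L^1[0,1]$ since $\mu$ is atomless.
\end{itemize}
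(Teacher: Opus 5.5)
\textbf{Verdict: genuine gap.} Your outer architecture matches the paper's. You average over translations, restrict the resulting symbol to $P$, and observe that this contraction kills $\I_2$ and sends $C_\mu$ to $\mu$ for $\mu\in M_0(P)$. From there you read off the distance formula, the isometric embedding with its contractive projection, the copy $L^1(\mu)$ via Lebesgue decomposition, and $\G\subseteq\I_2$ via factorization through $\ell^1$. The problem is that the step carrying the whole theorem, $\restr{\sigma(ST)}{P}=0$ for \emph{arbitrary} $S,T\in\D$, is only granted. The route you sketch toward it rests on claims you do not prove and the argument does not need. Nothing forces the individual kernel measures of a Dunford--Pettis operator to be Rajchman. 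Nor is there an evident reason for a weak-star integral of convolutions to lie in the norm-closed span of $L^1$ and $M_0*M_0$.

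What is available, and sufficient, is \emph{atomlessness}; the paper's chain runs as follows. For positive $R$, form the operator measure $\Gamma_R$ on $\T^2$ with its forward disintegration $\beta^R_x$ and reverse disintegration $\alpha^R_y$. Then $\sigma(ST)=\int_\T\alpha^S_z*\check\beta^T_z\,dm(z)$: the left factor contributes its \emph{reverse} kernel and the right factor its forward kernel, glued at the intermediate variable $z$. This is why both disintegrations matter. For positive $R\in\D$, both kernels are atomless a.e. Otherwise a measurable selection of atoms yields a positive weighted-composition operator (or its transpose) $0\le A\le R$, and $A$ fails to be Dunford--Pettis on a Rademacher-type weakly null sequence. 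Finally, take atomless $\alpha,\beta$. The bound $|(x+P)\cap(y+P)|\le2$ makes the sets $P-x$ pairwise disjoint modulo $\beta$-null sets, so $\{x:\beta(P-x)>0\}$ is countable and $(\alpha*\beta)(P)=\int\beta(P-x)\,d\alpha(x)=0$. Tonelli over $z$ then finishes.

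Your passage from positive to general factors also has a hole. Regularity of operators on $L^1$ lets you write $T$ as a combination of positive operators. But the atomlessness step applies only if those positive pieces are themselves Dunford--Pettis. That is Bourgain's theorem that $\D$ is a solid sublattice of the regular operators, and the same fact is what makes $0\le A\le R\in\D$ imply $A\in\D$ above. A smaller omission: for $\G\subsetneq\I_2$ you need $\mu*\mu$ to be singular, which is not automatic for Rajchman measures. The paper obtains it from $m(P+P)=0$, since strong independence makes the rational translates $q+2P$, $q\in\mathbb Q/\Z$, pairwise disjoint.
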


Thus $\I_2$ is a canonical intermediate closed operator ideal. It is large in the usual sense of not being contained in the strictly singular operators. This also answers a second question. Johnson, Pisier and Schechtman \cite[Introduction]{JPS} constructed a continuum of small closed ideals and asked whether $\B(L^1[0,1])$ has more than the three classical proper large closed ideals: the representable ideal, the Dunford--Pettis ideal, and the unique maximal proper ideal. The next consequence of Theorem~\ref{thm:square} gives an affirmative answer.

\begin{corollary}[An additional proper large closed ideal]\label{cor:large}
Let $\mathcal M(X)$ denote the unique maximal proper ideal of $\B(X)$. Then
\[
 \G\subsetneq\I_2\subsetneq\D\subsetneq\mathcal M(X)\subsetneq\B(X).
\]
In particular, $\B(L^1[0,1])$ has at least four distinct proper large closed two-sided ideals.
\end{corollary}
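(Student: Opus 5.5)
The chain $\G\subsetneq\I_2\subsetneq\D$ is exactly \eqref{eq:strict} of Theorem~\ref{thm:square}, so it can be taken as given. What remains is to check three things. First, $\D\subseteq\mathcal M(X)$ with strict inclusion. Second, $\mathcal M(X)\ne\B(X)$. Third, each of the four ideals is closed, two-sided and large.

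For the last point, closedness and two-sidedness are routine. $\G$ and $\D$ are standard closed operator ideals. $\I_2$ is closed by definition, and it is a two-sided ideal because $A(ST)B=(AS)(TB)$ with $AS,TB\in\D$ whenever $S,T\in\D$ and $A,B\in\B(X)$. The ideal $\mathcal M(X)$ is closed because it is maximal and proper: the closure of a proper ideal of a unital Banach algebra is proper, since the invertibles form an open set. Largeness means not being contained in the strictly singular operators. This follows for all four ideals from the inclusion $\G\subseteq\I_2\subseteq\D\subseteq\mathcal M(X)$ once a single representable operator that is not strictly singular is exhibited. A convenient witness is a projection of $X$ onto a complemented copy of $\ell^1$, for example the conditional expectation onto the span of normalized indicators of disjoint intervals $I_k$. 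This operator factors through $\ell^1$, so it lies in $\G$, and it is the identity on an infinite-dimensional subspace, so it is not strictly singular.

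For the top of the chain, we take the standard description of $\mathcal M(X)$ on $L^1$: it is the set of operators $T$ such that $\Id_X$ does not factor through $T$. This is a proper ideal because $\Id_X\notin\mathcal M(X)$. It contains every proper ideal $J$: if $T\in J$ and $\Id=ATB$, then $\Id\in J$, a contradiction. Additivity is the nontrivial input, which I would quote from Enflo--Starbird's primary-type results on $L^1$ (the fact that $\B(L^1)$ has a unique maximal ideal, as the statement presupposes). Once $\D$ is known to be proper, $\D\subseteq\mathcal M(X)$ follows. Properness holds because $\Id\notin\D$: the Rademacher functions are weakly null in $L^1$ but have norm one.

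Strictness of $\D\subsetneq\mathcal M(X)$ is the step that needs a concrete example, namely a non-Dunford--Pettis operator through which the identity does not factor. The first choice to try is the map $X\to L^2\to X$ given by the formal inclusion $J\colon L^2\to L^1$ composed with a bounded map $Q\colon L^1\to L^2$. A cleaner version is to take an operator $T$ onto the closed span of the Rademachers, which is isomorphic to $\ell^2$. Such a $T$ factors through a reflexive space, so it is weakly compact. Since $\Id_X$ is not weakly compact, $\Id_X$ cannot factor through $T$, hence $T\in\mathcal M(X)$. To get a non-Dunford--Pettis witness, I would take $T=JPQ$, where $Q\colon L^1\to\ell^2$ is surjective. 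Such a $Q$ exists because every separable Banach space is a quotient of $\ell^1$, and $\ell^1$ is complemented in $L^1$. Then the image $T(X)$ contains the Rademacher span. A weakly null sequence $(x_n)$ with $Q x_n=e_n$ is obtained by lifting the unit vectors $e_n$ in a bounded way via the quotient map. Since $Tx_n=r_n$ has norm one, $T$ is not Dunford--Pettis.

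I expect this verification to be the main obstacle. The issue is arranging the lift so that $(x_n)$ is weakly null: a bounded lift of a weakly null sequence through a quotient need not be weakly null. If it fails, I would replace the construction with the Kahane--Khintchine map $f\mapsto\sum_n\widehat f(n)\,r_n$ restricted to a lacunary set. Finally, $\mathcal M(X)\ne\B(X)$ since $\Id_X\notin\mathcal M(X)$. Together these give the chain of the corollary with four distinct proper large closed two-sided ideals.
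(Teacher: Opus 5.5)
\paragraph{The gap: strictness of $\D\subsetneq\mathcal M(X)$.}
Most of your outline matches the paper. You take $\G\subsetneq\I_2\subsetneq\D$ from Theorem~\ref{thm:square}, you treat $\D\subseteq\mathcal M(X)\ne\B(X)$ as formal, and you get largeness of all four ideals from a projection in $\G$ onto a complemented copy of $\ell^1$, exactly as the paper does. The gap is the strict inclusion $\D\subsetneq\mathcal M(X)$, and your proposed witness cannot work.

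The operator $T=JPQ$ factors through $\ell^2$, so it is weakly compact. But $L^1$ has the Dunford--Pettis property: every weakly compact operator from $L^1$ into any Banach space sends weakly null sequences to norm null sequences. In particular $Q\colon L^1\to\ell^2$ is already Dunford--Pettis. Hence no weakly null $(x_n)$ with $Qx_n=e_n$ exists, and $T\in\D$. The obstacle you flagged is therefore not a technicality about choosing a good lift; it is an impossibility.

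The fallback fails for the same reason. Any bounded operator into the Rademacher span, which is isomorphic to $\ell^2$, is weakly compact and hence Dunford--Pettis. Moreover, the map $f\mapsto\sum_{n\in\Lambda}\widehat f(n)r_n$ is not even bounded on $L^1$. A Hadamard lacunary set $\Lambda$ is Sidon, so the restrictions $\widehat f|_\Lambda$ with $f\in L^1$ fill all of $c_0(\Lambda)$, not just $\ell^2(\Lambda)$; Paley's inequality needs $H^1$.

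\paragraph{What a witness would need, and what the paper does.}
On $L^1$, the reasoning ``weakly compact, hence in $\mathcal M(X)$'' can never produce an element of $\mathcal M(X)\setminus\D$. Representable witnesses are also excluded, since $\G\subseteq\D$. A genuine witness must therefore be neither weakly compact nor representable. Proving that it fixes no copy of $L^1$ draws on the deeper structure theory of $\B(L^1)$ (Enflo--Starbird and its refinements).

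The paper does not attempt this. It quotes the classical strict inclusion $\D\subsetneq\mathcal M(X)$ from the introduction of \cite{JPS}. Your proof should do the same, or else supply a witness together with a reference establishing that it is $L^1$-singular.
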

\begin{proof}
The classical strict inclusion $\D\subsetneq\mathcal M(X)$ is recalled in \cite[Introduction]{JPS}; the two strict inclusions involving $\I_2$ are Theorem~\ref{thm:square}. To see directly that all four ideals are large, choose bounded operators $U:\ell^1\to X$ and $V:X\to\ell^1$ with $VU=\Id_{\ell^1}$. Then $UV\in\G$ is a projection onto an infinite-dimensional copy of $\ell^1$. Its restriction to its range is the identity, so it is not strictly singular. Each of the four ideals contains this projection.
\end{proof}

Acuaviva's recent preprint \cite{Acuaviva} constructs complemented subspaces of $L^1[0,1]$ using biased product measures and resolvent averages of convolution operators. In personal communication, Acuaviva has informed the author that related constructions yield $2^{\mathfrak c}$ distinct norm-closed two-sided ideals in $\B(L^1(0,1))$, where $\mathfrak c=2^{\aleph_0}$. This conclusion is also announced in the introduction of \cite{Acuaviva}, with details deferred to a subsequent version. The present paper follows a different approach. Its central ingredients are translation averaging and atomless disintegrations, combined with a strongly independent set supporting a Rajchman measure. These identify the closed square of the Dunford--Pettis ideal as a canonical proper large ideal and give an exact distance formula for a family of convolution operators.

Our construction establishes the existence of an additional large ideal; it does not assert that there are infinitely many large closed ideals. If
\[
 A=\B(X)/\G,\qquad D=\D/\G,\qquad
 D^{[n]}=\clspan\{d_1\cdots d_n:d_i\in D\},
\]
then \eqref{eq:strict} is equivalently $0\ne D^{[2]}\subsetneq D$, and $\D/\I_2$ is canonically isometric to $D/D^{[2]}$.

The harmonic-analysis input has a classical origin. Varopoulos constructed strongly independent sets supporting Rajchman measures \cite{VarSets,VarDecomp}. Ghandehari used these sets to prove that $M_0(G)$ has no approximate identity for a nondiscrete locally compact abelian group \cite[Theorem~3.3]{Gh12}. We use the same obstruction. The step that gives Theorem~\ref{thm:ai} is a contractive convolution-bimodule projection from the Dunford--Pettis operators onto the Rajchman convolution operators. Theorem~\ref{thm:square} requires a further fact: averaging an arbitrary product of two Dunford--Pettis operators produces a measure whose restriction to $P$ vanishes. The two factors need not commute with translations.

We give the shorter proof of Theorem~\ref{thm:ai} first. The square-ideal theorem is proved in Sections~\ref{sec:kernels}--\ref{sec:square}. Section~\ref{sec:transfer} extends the approximate-identity obstruction to finite measure spaces with an atomless part. Section~\ref{sec:norming} treats quantitative power-ideal estimates as a separate abstract statement. In particular, neither main theorem depends on a quantitative detection theorem or on an unpublished companion manuscript.

\section{Rajchman measures and translation averaging}\label{sec:averaging}

We record the elementary properties of Rajchman measures used below, including the convolution criterion for Dunford--Pettis operators.

\begin{lemma}\label{lem:rajchman}
The space $M_0(\T)$ is a closed convolution ideal in $M(\T)$. If $\mu\in M_0(\T)$ and $\nu\ll|\mu|$, then $\nu\in M_0(\T)$. Every Rajchman measure is atomless.
\end{lemma}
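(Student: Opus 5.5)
We prove the three assertions in order, using only Fourier coefficients, the Riemann--Lebesgue lemma and Wiener's lemma.

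\emph{Closed convolution ideal.} Since $|\widehat\mu(n)|\le\|\mu\|$, the map $\mu\mapsto\widehat\mu(n)$ is a contraction for each $n$, and $M_0(\T)$ is a linear subspace. If $\mu_k\to\mu$ in total variation with $\mu_k\in M_0(\T)$, then
\[
\sup_n|\widehat\mu_k(n)-\widehat\mu(n)|\le\|\mu_k-\mu\|\to0.
\]
So $\widehat\mu$ is a uniform limit of sequences in $c_0(\Z)$, hence lies in $c_0(\Z)$; thus $M_0(\T)$ is closed. For the ideal property we use $\widehat{\mu*\nu}=\widehat\mu\,\widehat\nu$. If $\mu\in M_0(\T)$ and $\nu\in M(\T)$, then $|\widehat{\mu*\nu}(n)|\le\|\nu\|\,|\widehat\mu(n)|\to0$.

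\emph{Absolute continuity.} This is the main step. Let $\nu\ll|\mu|$. Write $d\mu=h\,d|\mu|$ with $|h|=1$, so $d\nu=g\,d|\mu|=g\bar h\,d\mu$ with $g\in L^1(|\mu|)$. Put $k=g\bar h\in L^1(|\mu|)$, so $\nu=k\mu$. Trigonometric polynomials are dense in $C(\T)$, and $C(\T)$ is dense in $L^1(|\mu|)$. Since $\|k\mu-\phi\mu\|=\|k-\phi\|_{L^1(|\mu|)}$, it suffices by closedness to treat $\phi=e^{2\pi i j t}$. In that case
\[
\widehat{\phi\mu}(n)=\widehat\mu(n-j)\to0,
\]
and linearity plus closedness finish the argument. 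The only point needing care is the approximation of $k$ in $L^1(|\mu|)$ rather than uniformly, and the identity $\|(k-\phi)\mu\|=\|k-\phi\|_{L^1(|\mu|)}$ handles it.

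\emph{Atomlessness.} Suppose $\mu\in M_0(\T)$ has an atom $c\,\delta_a$ with $c\ne0$. Then $c\,\delta_a\ll|\mu|$, so $\delta_a\in M_0(\T)$ by the previous step. But $|\widehat{\delta_a}(n)|=1$ for all $n$, a contradiction.

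Alternatively, Wiener's lemma gives
\[
\frac{1}{2N+1}\sum_{|n|\le N}|\widehat\mu(n)|^2\to\sum_x|\mu(\{x\})|^2,
\]
and for $\mu\in M_0(\T)$ the left-hand side tends to $0$. The first argument already suffices.
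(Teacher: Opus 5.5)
Your proof is correct and takes essentially the same route as the paper. Both use the bound $|\widehat\mu(n)|\le\|\mu\|$ for closedness and the ideal property, and both get absolute continuity from polar decomposition plus $L^1(|\mu|)$-approximation by trigonometric polynomials, whose multiples of $\mu$ have coefficients $\widehat\mu(n-j)$. Atomlessness then follows, as in the paper, because a nonzero point mass is absolutely continuous with respect to $|\mu|$ but is not Rajchman.
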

\begin{proof}
Closedness and the convolution-ideal property follow from the Fourier transform and $|\widehat\mu(n)|\leq\|\mu\|$. If $\mu\in M_0$ and $h\in L^1(|\mu|)$, approximate $h$ in $L^1(|\mu|)$ by trigonometric polynomials. Each polynomial multiple of $\mu$ is Rajchman, since its Fourier coefficients are finite linear combinations of translates of $\widehat\mu$. Thus $h\mu\in M_0$. Polar decomposition now implies the assertion for all $\nu\ll|\mu|$. In particular, restricting a Rajchman measure to a singleton again gives a Rajchman measure. A nonzero point mass is not Rajchman, so all such restrictions vanish.
\end{proof}

\begin{lemma}\label{lem:convolution}
For $\mu\in M(\T)$, $\|C_\mu\|_{\B(L^1)}=\|\mu\|$. Moreover,
\[
 C_\mu\in\D\quad\Longleftrightarrow\quad\mu\in M_0(\T).
\]
\end{lemma}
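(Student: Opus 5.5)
The norm identity comes first. Minkowski's integral inequality gives $\|C_\mu f\|_1\le\|\mu\|\,\|f\|_1$. For the reverse inequality, take an approximate identity $f_k\ge0$ with $\|f_k\|_1=1$, for instance normalized Fejér kernels. Then $C_\mu f_k=f_k*\mu\to\mu$ weak$^*$, and lower semicontinuity of total variation under weak$^*$ convergence gives $\liminf\|C_\mu f_k\|_1\ge\|\mu\|$. The more useful reformulation is that $\|f*\mu\|_1\le\|\mu\|\|f\|_1$, with equality in the limit along $f_k$.

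For the Dunford--Pettis criterion, I would use the characters $e_n(x)=e^{2\pi i n x}$. The sequence $(e_n)$ is bounded in $L^1$ and weakly null by the Riemann--Lebesgue lemma, since $L^\infty\subseteq L^1$ and $\int g\,\overline{e_n}\to0$ for $g\in L^1$. Moreover $C_\mu e_n=\widehat\mu(n)e_n$, so $\|C_\mu e_n\|_1=|\widehat\mu(n)|$. Hence if $C_\mu\in\D$, then $\widehat\mu(n)\to0$ as $|n|\to\infty$, and $\mu\in M_0(\T)$. This is the easy direction.

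For the converse, suppose $\mu\in M_0(\T)$ and let $(f_j)$ be weakly null in $L^1$. I must show $\|f_j*\mu\|_1\to0$. I would reduce to an approximation argument. Fix $\varepsilon>0$ and choose a Fejér kernel $K_N$ such that $\|\mu-K_N*\mu\|$ is small. However, this fails when $\mu$ is singular, so a different route is needed. The plan is as follows.

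\textbf{Step 1.} Use the Dunford--Pettis characterization of weakly compact subsets of $L^1$. A weakly null sequence $(f_j)$ is bounded and uniformly integrable. It converges to $0$ weakly, hence $\widehat f_j(n)\to0$ for each $n$.

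\textbf{Step 2.} Uniform integrability lets me split each $f_j$ into a part bounded in $L^2$ (truncation at a level $M$) plus a remainder with $L^1$ norm less than $\varepsilon$, uniformly in $j$. The truncated parts $g_j$ are bounded in $L^2$ and still weakly null: they are weakly null up to $\varepsilon$, and after passing to subsequences they converge weakly in $L^2$ to some $g$ with $\|g\|_1\le\varepsilon$.

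\textbf{Step 3.} For $L^2$-bounded $g_j$, Cauchy--Schwarz and Plancherel give
\[
\|g_j*\mu\|_1\le\|g_j*\mu\|_2=\Bigl(\sum_n|\widehat g_j(n)|^2|\widehat\mu(n)|^2\Bigr)^{1/2}.
\]
Split the sum into $|n|\le N$ and $|n|>N$. The tail is at most $\sup_{|n|>N}|\widehat\mu(n)|\cdot\sup_j\|g_j\|_2$, which is small for large $N$ because $\mu$ is Rajchman. The finite part tends to $|\widehat g(n)|$-terms controlled by $\|g\|_1$. Thus $\limsup_j\|f_j*\mu\|_1\le C\varepsilon\|\mu\|$ along a subsequence. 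A subsequence argument then yields convergence of the whole sequence.

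The main obstacle is Step 2/3: the truncation level $M$ makes $\sup_j\|g_j\|_2\le M^{1/2}\sup_j\|f_j\|_1^{1/2}$ large. The order of quantifiers must therefore be $\varepsilon$, then $M$, then $N$, so that the tail bound absorbs $M$. I would check this carefully, and handle the weak limit $g$ of the truncations by noting $\widehat{g_j}(n)\to\widehat g(n)$ with $|\widehat g(n)|\le\varepsilon$.
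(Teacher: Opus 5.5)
\emph{Verdict: there is a genuine gap in the sufficiency direction.} Your norm identity and the necessity direction are the paper's argument. For sufficiency you use the same ingredients as the paper: uniform integrability, truncation to an $L^2$-bounded piece, and decay of $\widehat\mu$. The paper packages these as compactness of $C_\mu$ on $L^2$, which makes $C_\mu$ send relatively weakly compact subsets of $L^1$ to totally bounded sets.

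The gap is in Step~3. You estimate the whole truncated piece by $\|g_j*\mu\|_1\le\|g_j*\mu\|_2$, then control the low frequencies by $|\widehat g(n)|\le\varepsilon$. That only gives
\[
 \limsup_j\sum_{|n|\le N}|\widehat g_j(n)|^2|\widehat\mu(n)|^2\le(2N+1)\varepsilon^2\|\mu\|^2 .
\]
Since $N$ is chosen after $\varepsilon$ and $M$, this is not $O(\varepsilon)$. Reordering the quantifiers cannot fix it. Along your subsequence, $g_j*\mu\rightharpoonup g*\mu$ weakly in $L^2$, so $\liminf_j\|g_j*\mu\|_2\ge\|g*\mu\|_2$. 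About $g$ you only know $\|g\|_1\le\varepsilon$ and $|g|\le M$, hence $\|g\|_2^2\le M\varepsilon$. Here $M=M(\varepsilon)$, the uniform-integrability level, can grow much faster than $1/\varepsilon$. So $\|g*\mu\|_2$ need not be small; only $\|g*\mu\|_1\le\varepsilon\|\mu\|$ is. The $L^2$ estimate must not be applied to the non-weakly-null part $g$.

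The repair is to compare $g_j$ with its weak limit rather than with $0$. Write
\[
 f_j*\mu=(f_j-g_j)*\mu+(g_j-g)*\mu+g*\mu .
\]
The first and third terms have $L^1$ norm at most $\varepsilon\|\mu\|$ by Young's inequality. The sequence $g_j-g$ is weakly null and bounded in $L^2$ by $2\sup_j\|g_j\|_2$, a constant depending only on $M$. For that fixed $M$, the tail is at most $2\sup_j\|g_j\|_2\sup_{|n|>N}|\widehat\mu(n)|$, which tends to $0$ as $N\to\infty$. The finitely many low-frequency terms tend to $0$ as $j\to\infty$, because $\widehat g_j(n)\to\widehat g(n)$. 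Hence $\|(g_j-g)*\mu\|_2\to0$ and $\limsup_j\|f_j*\mu\|_1\le2\varepsilon\|\mu\|$ along the subsequence. Your subsequence argument then finishes the proof. This is exactly what the paper's compactness formulation encodes: $C_\mu$ is compact on $L^2$, so it maps the weakly null bounded sequence $g_j-g$ to a norm-null sequence, with no frequency bookkeeping against $\varepsilon$.
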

\begin{proof}
The upper norm estimate is Young's inequality. Let $(h_k)$ be a nonnegative $L^1$ approximate identity with $\|h_k\|_1=1$. The measures $(C_\mu h_k)m$ converge weak-star to $\mu$. Lower semicontinuity of the variation norm gives $\|\mu\|\leq\liminf_k\|C_\mu h_k\|_1$, proving equality.

For necessity, the characters $e_n(x)=e^{2\pi inx}$ are weakly null in $L^1$ as $|n|\to\infty$, by the Riemann--Lebesgue lemma applied to $L^\infty\subseteq L^1$. Since $C_\mu e_n=\widehat\mu(n)e_n$, the Dunford--Pettis property implies $\widehat\mu(n)\to0$.

Conversely, if $\mu\in M_0$, then $C_\mu$ is compact on $L^2$: in the Fourier basis it is diagonal with diagonal in $c_0(\Z)$. A bounded subset of $L^\infty$ is bounded in $L^2$, so its image is relatively compact in $L^1$. Any relatively weakly compact subset $K\subseteq L^1$ is uniformly integrable. Truncation therefore approximates $K$, uniformly in $L^1$, by bounded subsets of $L^\infty$. Boundedness of $C_\mu$ shows that $C_\mu K$ is totally bounded. Hence $C_\mu$ maps relatively weakly compact sets to relatively norm-compact sets and is Dunford--Pettis.
\end{proof}

For $s\in\T$, define $U_sf(x)=f(x-s)$. The translation group is strongly continuous on $L^1$. For $T\in\B(X)$ set
\begin{equation}\label{eq:average}
 E(T)f=\int_\T U_{-s}TU_sf\,dm(s).
\end{equation}
This is a Bochner integral in $X$ for each fixed $f$; it is not asserted to be a Bochner integral in $\B(X)$.

\begin{proposition}\label{prop:average}
The map $E$ is a contractive projection onto the translation-invariant operators. There is a linear contraction $\sigma:\B(X)\to M(\T)$ such that
\[
 E(T)=C_{\sigma(T)},\qquad \sigma(C_\mu)=\mu.
\]
Furthermore,
\begin{align}
 E(\D)&\subseteq\D,& \sigma(\D)&\subseteq M_0(\T),\label{eq:preserves}\\
 E(C_\mu T)&=C_\mu E(T),&E(TC_\mu)&=E(T)C_\mu \label{eq:module}
\end{align}
for $\mu\in M(\T)$ and $T\in\B(X)$.
\end{proposition}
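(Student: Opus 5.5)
We first check that $E(T)$ is well defined and contractive. For fixed $f$, the map $s\mapsto U_{-s}TU_sf$ is norm continuous, because translation is strongly continuous and $U_{-s}$ is an isometry. Indeed,
\[
\|U_{-s}TU_sf-U_{-t}TU_tf\|\le\|T\|\,\|U_sf-U_tf\|+\|(U_{-s}-U_{-t})TU_tf\|.
\]
So the Bochner integral exists and $\|E(T)f\|\le\|T\|\|f\|$. Invariance of $m$ gives $U_{-r}E(T)U_r=E(T)$, so $E(T)$ commutes with translations. If $T$ commutes with translations, then $E(T)=T$. Hence $E$ is a contractive projection onto the translation-invariant operators.

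Next we identify these operators with convolutions. A bounded translation-invariant operator $S$ on $L^1(\T)$ commutes with every $C_h$, $h\in L^1$, since $C_h=\int h(s)U_s\,dm(s)$ strongly. Take a nonnegative approximate identity $(h_k)$ with $\|h_k\|_1=1$. The measures $(Sh_k)m$ are bounded by $\|S\|$, so a subnet converges weak-star to some $\nu$ with $\|\nu\|\le\|S\|$. For a trigonometric polynomial $g$ we have $Sg=S\lim h_k*g=\lim (Sh_k)*g$, and $(Sh_k)*g\to\nu*g$ pointwise, since $g$ is continuous. Thus $S=C_\nu$ on trigonometric polynomials, and by density $S=C_\nu$ everywhere. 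The measure $\nu$ is unique, as its Fourier coefficients are the eigenvalues on $e_n$. Define $\sigma(T)$ as the measure with $E(T)=C_{\sigma(T)}$. It is linear by uniqueness, and Lemma~\ref{lem:convolution} gives $\|\sigma(T)\|=\|E(T)\|\le\|T\|$. Finally $\sigma(C_\mu)=\mu$.

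For the module identities \eqref{eq:module}, note that $C_\mu$ commutes with every $U_s$. This gives
\[
U_{-s}C_\mu TU_sf=C_\mu U_{-s}TU_sf,
\]
and the bounded operator $C_\mu$ passes through the Bochner integral. For $TC_\mu$ we have $U_{-s}TC_\mu U_s f=U_{-s}TU_s(C_\mu f)$, which gives $E(TC_\mu)=E(T)C_\mu$ directly.

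The main obstacle is \eqref{eq:preserves}. Since $E(T)$ is a convolution, Lemma~\ref{lem:convolution} reduces $E(\D)\subseteq\D$ to showing $\sigma(T)\in M_0$ for $T\in\D$. We compute Fourier coefficients by testing on characters. Since $U_se_n=e^{-2\pi ins}e_n$ and $U_{-s}$ multiplies the $n$-th Fourier coefficient by $e^{2\pi ins}$, we get
\[
\widehat{\sigma(T)}(n)=\langle E(T)e_n,e_n\rangle=\int_\T\langle U_{-s}TU_se_n,e_n\rangle\,dm(s)=\langle Te_n,e_n\rangle,
\]
where $\langle g,e_n\rangle=\int g\overline{e_n}\,dm$ is the $n$-th Fourier coefficient. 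Now $(e_n)$ is weakly null in $L^1$, so $\|Te_n\|_1\to0$ because $T$ is Dunford--Pettis. Hence $|\widehat{\sigma(T)}(n)|\le\|Te_n\|_1\to0$, which gives $\sigma(T)\in M_0(\T)$ and therefore $E(T)\in\D$. So the expected obstacle dissolves once we use this diagonal-coefficient formula. The remaining care is in justifying the representation of translation-invariant operators as convolutions.
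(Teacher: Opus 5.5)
Your proof is correct, but it differs from the paper's in two places.

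\textbf{Representation of translation-invariant operators.} The paper cites Wendel's multiplier theorem to identify translation-invariant operators with convolutions. You prove this directly from an approximate identity and weak-star compactness, and then take the isometry from Lemma~\ref{lem:convolution}. This works. One small point: along a subnet, a norm limit and a pointwise limit need not agree. Here this is harmless, because for a trigonometric polynomial $g$ the functions $(Sh_k)*g$ lie in a fixed finite-dimensional space and converge coefficientwise to $\nu*g$. Alternatively, use a sequence, since the ball of $M(\T)$ is weak-star metrizable.

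\textbf{The step \eqref{eq:preserves}.} This is the more substantive difference, because the paper argues in the opposite order. For $f_n\rightharpoonup0$ it bounds $\|E(T)f_n\|_1\le\int_\T\|TU_sf_n\|_1\,dm(s)$ and uses dominated convergence to get $E(T)\in\D$ directly. It then needs only the easy (character) direction of Lemma~\ref{lem:convolution} to conclude $\sigma(T)\in M_0(\T)$. You instead compute $\widehat{\sigma(T)}(n)=\langle Te_n,e_n\rangle$ and obtain $\sigma(T)\in M_0(\T)$ first. You then need the harder direction of Lemma~\ref{lem:convolution} ($L^2$-compactness plus uniform integrability) to get back to $E(T)\in\D$.

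The trade-off is as follows. The paper's argument is softer: it applies verbatim to any property defined by norm-nullity on weakly null sequences, and to averages over any compact group. Yours gives an explicit formula for the symbol and proves more. It shows that $\sigma(T)$ is Rajchman as soon as the diagonal coefficients $\langle Te_n,e_n\rangle$ tend to zero, which concerns only the action of $T$ on the characters.
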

\begin{proof}
For fixed $f$, the integrand in \eqref{eq:average} is norm-continuous and bounded in norm by $\|T\|\|f\|_1$. Thus $E$ is well defined and contractive. Invariance of Haar measure gives $U_{-t}E(T)U_t=E(T)$ for every $t$. Operators commuting with all translations are fixed by $E$. Wendel's multiplier theorem \cite{Wendel} identifies these operators isometrically with convolution by measures, giving $\sigma$.

If $T\in\D$ and $f_n\rightharpoonup0$ in $L^1$, then for each fixed $s$, $\|TU_sf_n\|_1\to0$. The sequence $(f_n)$ is norm-bounded, and dominated convergence gives
\[
 \|E(T)f_n\|_1\leq\int_\T\|TU_sf_n\|_1\,dm(s)\longrightarrow0.
\]
Lemma~\ref{lem:convolution} now yields \eqref{eq:preserves}. Finally, $C_\mu$ commutes with every $U_s$; moving it through the vector integral proves \eqref{eq:module}.
\end{proof}

\section{A strongly independent set and the approximate-identity obstruction}\label{sec:polar}

A set $P$ in an abelian group is \emph{strongly independent} if every relation $\sum_{j=1}^r n_jp_j=0$ among distinct points of $P$, with integer coefficients, forces every $n_j$ to be divisible by the torsion parameter $k(P)$. This parameter is the least positive integer $k$ for which $kP=\{0\}$, if one exists, and is infinite otherwise. Divisibility by infinity means that the coefficient is zero.

\begin{theorem}[Varopoulos; see Ghandehari]\label{thm:varopoulos}
There is a compact perfect strongly independent set $P\subseteq\T$ supporting a positive Rajchman probability measure. For distinct $x,y\in\T$,
\begin{equation}\label{eq:intersection}
 |(x+P)\cap(y+P)|\leq2.
\end{equation}
\end{theorem}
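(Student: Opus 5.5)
The existence part we take from the literature, and we prove only the intersection bound \eqref{eq:intersection} directly. Varopoulos \cite{VarSets,VarDecomp} constructed a compact perfect set $P\subseteq\T$ that is strongly independent and carries a positive Rajchman probability measure; Ghandehari's exposition \cite[Theorem~3.3]{Gh12} states this for $\T$. We will only check that the torsion convention matches. $\T$ contains torsion points, but a perfect strongly independent set can contain at most one torsion point. If $p\in P$ has finite order $q$, then $qp=0$ is a relation forcing $k(P)\mid q$. On the other hand, $P$ has non-torsion points, so $k(P)=\infty$, and then $qp=0$ is impossible. Hence $P$ has no torsion points, except possibly $0$ (a single point can be discarded without losing perfectness or the measure, since the measure is atomless by Lemma~\ref{lem:rajchman}). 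So $k(P)=\infty$, and every nontrivial integer relation among distinct points of $P$ vanishes identically.

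For \eqref{eq:intersection}, put $d=x-y\neq0$ and suppose $z\in(x+P)\cap(y+P)$. Then $z-x=p$ and $z-y=p'$ with $p,p'\in P$, so $p'-p=d$. The task is therefore to bound the number of pairs $(p,p')\in P^2$ with $p'-p=d$; each pair determines $z=x+p$. Suppose two distinct pairs $(p_1,p_1')$ and $(p_2,p_2')$ give the same $d$. Then
\[
p_1'-p_1-p_2'+p_2=0 .
\]
Note that $p_i\neq p_i'$ because $d\neq0$, and $p_1\neq p_2$ because the pairs are distinct and $p'=p+d$.

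We now collect coefficients over the distinct points among $\{p_1,p_1',p_2,p_2'\}$. Strong independence with $k=\infty$ forces every net coefficient to be zero. The coefficient of $p_1$ is $-1$, plus $+1$ if $p_1=p_2'$. For it to vanish we need $p_1=p_2'$, since $p_1\neq p_1'$ and $p_1\neq p_2$ rule out other cancellations. Symmetrically, $p_2=p_1'$. So the pairs are $(p_1,p_2)$ and $(p_2,p_1)$, and $d=p_2-p_1=p_1-p_2$, that is, $2d=0$.

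It follows that any two distinct solutions must be swaps of each other. A third solution would have to be the swap of both of them, which is impossible. This gives at most two solutions, and hence at most two intersection points, proving \eqref{eq:intersection}.

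The main point needing care is the torsion bookkeeping, namely ensuring $k(P)=\infty$, so that cancellations in the relation are exact rather than only modulo $k$. The combinatorial coefficient check is routine.
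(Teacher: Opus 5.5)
Your route differs from the paper's. The paper takes both the existence of $P$ and the intersection bound from Ghandehari's Lemmas~3.1--3.2 (his Theorem~3.3 is the no-approximate-identity result), and adds only the observation $k(P)=\infty$. Deriving the bound yourself from $\Z$-independence is a reasonable, more self-contained alternative. However, the coefficient step you call routine is wrong.

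In $p_1'-p_1-p_2'+p_2=0$ the point $p_2'$ carries coefficient $-1$. So the coincidence $p_1=p_2'$ adds $-1$, not $+1$, to the coefficient of $p_1$. The only terms with a plus sign are $p_1'$ and $p_2$, and you showed that neither equals $p_1$. Hence the net coefficient of $p_1$ is $-1$ or $-2$, never $0$, and with $k(P)=\infty$ two distinct solutions cannot exist at all.

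Equivalently, $p_1'+p_2=p_1+p_2'$ forces the multisets $\{p_1',p_2\}$ and $\{p_1,p_2'\}$ to coincide. The matching $p_1'=p_1$ is excluded by $d\neq0$, and the matching $p_1'=p_2'$, $p_2=p_1$ means the two pairs are equal. So on $\T$ one actually gets $|(x+P)\cap(y+P)|\leq1$, which implies the stated bound.

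Your swap configuration $p_1'=p_2$, $p_2'=p_1$ turns the relation into $2p_2-2p_1=0$. You recorded $2d=0$ without noticing that this is itself a forbidden relation. It can occur only when $k(P)=2$, which is why the constant $2$ appears in statements for general groups. Your final count is true, but the claim that the coefficient of $p_1$ vanishes when $p_1=p_2'$ is false and must be replaced by the argument above.

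The torsion bookkeeping, which you single out as the delicate point, also needs repair. Once $k(P)=\infty$, strong independence excludes every torsion point, including $0$, since $1\cdot0=0$ is a relation with a nonzero coefficient. So the opening claim that $P$ may contain one torsion point, and the \emph{except possibly $0$} caveat, should be removed. The proposed remedy would fail in any case: deleting a point from a compact perfect set leaves a set that is not closed, so compactness is lost. Finally, the assertion that $P$ has a non-torsion point needs a reason. For instance, a nonempty perfect set is uncountable while $\mathbb Q/\Z$ is countable. The paper instead notes that $kP=\{0\}$ would place $P$ inside the finite group of $k$-torsion points.
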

We use the formulation in \cite[Lemmas~3.1--3.2]{Gh12}; see also \cite[Lemma~2.7 and Lemma~4.1]{Gh19}. Since a finite torsion subgroup of $\T$ is finite and $P$ is perfect, $k(P)=\infty$. Thus the points of $P$ are linearly independent over $\Z$.

\begin{lemma}[The polar-set calculation]\label{lem:polar}
If $\alpha,\beta$ are finite positive atomless measures, then
\[
 (\alpha*\beta)(P)=0.
\]
The same holds with $\beta$ replaced by its reflected measure $\check\beta(B)=\beta(-B)$. Consequently $m(P)=0$. For arbitrary complex atomless measures, the convolution has zero restriction to $P$.
\end{lemma}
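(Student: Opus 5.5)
Write $(\alpha*\beta)(P)$ via Fubini as
\[
(\alpha*\beta)(P)=\int_\T\alpha(P-y)\,d\beta(y).
\]
The plan is to show that $\alpha(P-y)=0$ for all but countably many $y$. Atomlessness of $\beta$ then makes the integral vanish.

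\textbf{Step 1: disjointness of translates.} The key input is \eqref{eq:intersection}: for distinct $y,y'$, the sets $P-y$ and $P-y'$ (translates of $P$ by $-y$ and $-y'$) meet in at most two points. Since $\alpha$ is atomless, any family of distinct translates $P-y$ is pairwise $\alpha$-almost disjoint.

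\textbf{Step 2: only countably many translates are charged.} Because $\alpha$ is finite, for each $k$ there can be only finitely many $y$ with $\alpha(P-y)>1/k$. Indeed, if $y_1,\dots,y_N$ are distinct with $\alpha(P-y_i)>1/k$, then by almost-disjointness
\[
\alpha\Bigl(\bigcup_i (P-y_i)\Bigr)=\sum_i\alpha(P-y_i)>N/k,
\]
so $N<k\|\alpha\|$. Hence $y\mapsto\alpha(P-y)$ is nonzero only on a countable set $Q$.

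\textbf{Step 3: integrate.} The map $y\mapsto\alpha(P-y)$ is Borel, since $P$ is compact and we may apply Fubini to the indicator of $\{(x,y):x+y\in P\}$. Since $\beta(Q)=0$ for the atomless $\beta$, we get $(\alpha*\beta)(P)=0$.

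\textbf{Remaining assertions.}
\begin{itemize}
\item \emph{Reflected measure.} The reflection $\check\beta$ is again positive and atomless, so the same argument applies verbatim.
\item \emph{$m(P)=0$.} Take $\alpha=\beta=m$. Then $m*m=m$, so $m(P)=(m*m)(P)=0$.
\item \emph{Complex measures.} For complex atomless $\alpha,\beta$, we have $|\alpha*\beta|\le|\alpha|*|\beta|$, and $|\alpha|,|\beta|$ are positive and atomless. Applying the positive case to every Borel subset $B\subseteq P$ gives $(|\alpha|*|\beta|)(B)=0$. Hence $|\alpha*\beta|(P)=0$, so the restriction of $\alpha*\beta$ to $P$ vanishes.
\end{itemize}

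\textbf{Main obstacle.} Step 2 carries the argument, and the only subtle point is measurability in Step 3. If measurability of $y\mapsto\alpha(P-y)$ were in doubt, one could instead note that $\alpha(P-y)$ is upper semicontinuous in $y$, because $P$ is compact.
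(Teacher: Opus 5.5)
Your proof is correct and follows essentially the same route as the paper: the intersection bound \eqref{eq:intersection} plus atomlessness makes the translates of $P$ almost disjoint, finiteness of the measure leaves only countably many charged translates, and Fubini--Tonelli with atomlessness of the other factor gives zero. The only differences are that you swap the roles of $\alpha$ and $\beta$ in the iterated integral, and that you spell out the counting and measurability details which the paper leaves implicit.
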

\begin{proof}
By \eqref{eq:intersection}, the sets $P-x$, $x\in\T$, are pairwise disjoint modulo $\beta$-null sets. In a finite measure space only countably many members of such a family can have positive measure. Indeed, for each $k$ there are only finitely many with measure at least $1/k$, by finite additivity modulo null sets. Thus
\[
 N_\beta=\{x:\beta(P-x)>0\}
\]
is countable. Tonelli's theorem and atomlessness of $\alpha$ give
\[
 (\alpha*\beta)(P)=\int_\T\beta(P-x)\,d\alpha(x)=0.
\]
Reflection preserves atomlessness. Taking $\alpha=\beta=m$ proves $m(P)=0$. For complex measures use $|\alpha*\beta|\leq|\alpha|*|\beta|$.
\end{proof}

This is the obstruction used in Ghandehari's proof that $M_0(\T)$ has no approximate identity \cite[Theorem~3.3]{Gh12}. Translation averaging transports it to the whole operator ideal.

\begin{proof}[Proof of Theorem~\ref{thm:ai}]
Choose a Rajchman probability $\mu$ supported on $P$, and put $R=C_\mu$. By Lemma~\ref{lem:convolution}, $R$ is positive, Dunford--Pettis, and has norm one. For $T\in\D$, Proposition~\ref{prop:average} gives
\[
 \sigma(RT)=\mu*\sigma(T),\qquad
 \sigma(TR)=\sigma(T)*\mu,
\]
where $\sigma(T)\in M_0(\T)$. Both factors are atomless. Lemma~\ref{lem:polar} implies that the two product symbols vanish on $P$. Since $\sigma$ and restriction to $P$ are contractions,
\[
 \|R-RT\|\geq
 \|\restr{\sigma(R-RT)}{P}\|=\|\mu\|=1.
\]
The other inequality is identical. Neither a right nor a left approximate identity can approximate this fixed $R$. The choice $T=0$ also shows that both infima in \eqref{eq:obstruction} equal one.
\end{proof}

\section{Two atomless disintegrations}\label{sec:kernels}

To separate $C_\mu$ from all of $\I_2$, one must treat products whose two factors are arbitrary Dunford--Pettis operators. Averaging is a convolution-bimodule map, but is not asserted to be multiplicative. We therefore use the operator measures of the two factors.

For a positive $R\in\B(X)$ there is a finite positive Borel measure $\Gamma_R$ on $\T^2$ satisfying
\begin{equation}\label{eq:opmeasure}
 \Gamma_R(A\times B)=\int_A R1_B\,dm,
 \qquad
 \int gRf\,dm=\int_{\T^2}g(x)f(y)\,d\Gamma_R(x,y)
\end{equation}
for bounded Borel $f,g$. Its marginals are
\begin{equation}\label{eq:marginals}
 \Gamma_R(A\times\T)=\int_A R1\,dm,\qquad
 \Gamma_R(\T\times B)=\int_B R^*1\,dm.
\end{equation}
One construction applies the Riesz representation theorem, pointwise off a common null set, to $R^*$ on a countable dense rational vector subspace of $C(\T)$; the resulting positive kernels have masses at most $\|R\|$. Equations~\eqref{eq:opmeasure} then follow first for continuous functions and then for bounded Borel functions by a monotone-class argument. This also gives the usual positive operator-measure representation; compare \cite[Section~1]{Liu}.

Disintegration on compact metric spaces, followed by multiplication by the marginal densities, gives measurable finite positive kernels with
\begin{equation}\label{eq:disint}
 d\Gamma_R(x,y)=dm(x)\,d\beta_x^R(y)
                 =d\alpha_y^R(x)\,dm(y).
\end{equation}
In particular,
\[
 \beta_x^R(\T)=R1(x)\quad\text{a.e.},\qquad
 \alpha_y^R(\T)=R^*1(y)\leq\|R\|\quad\text{a.e.}
\]
Kernels are taken on the completed base measure space. All measure-valued integrals below are understood weak-star, or equivalently as integrals of kernels; norm measurability with values in $M(\T)$ is not needed.

We use two established lattice facts. Every real bounded operator on $L^1$ is regular. The Dunford--Pettis operators on $L^1$ form a solid sublattice, by Bourgain \cite{Bourgain}; Liu \cite[Lemma~2.17]{Liu} records the stronger band property. Thus the positive and negative parts of a real Dunford--Pettis operator are Dunford--Pettis, and $0\leq S\leq R\in\D$ implies $S\in\D$. For a complex operator, apply this to its real and imaginary parts on the real $L^1$ space.

We also use measurable selection of atoms: if a measurable field of finite positive measures has an atomic part on a set of positive base measure, then on a measurable set $F$ of positive measure there are $\varepsilon>0$, a measurable $a\geq\varepsilon$, and a measurable point map $\tau$ with $a(t)\delta_{\tau(t)}$ dominated by that field. This follows from measurable enumeration of the atoms; see \cite[Theorems~1.3--1.4]{Liu}. Completion of the base measure suffices for the measurable selections.

\begin{lemma}\label{lem:atomless}
If $R\geq0$ is Dunford--Pettis, both $\beta_x^R$ and $\alpha_y^R$ in \eqref{eq:disint} are atomless for almost every base point.
\end{lemma}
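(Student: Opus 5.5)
We argue by contradiction, treating $\beta^R$ first; $\alpha^R$ is handled symmetrically by passing to the adjoint kernel. Suppose the fields $x\mapsto\beta_x^R$ have nonzero atomic parts on a set of positive measure. The measurable selection of atoms recalled above gives a measurable set $F$ with $m(F)>0$, a number $\varepsilon>0$, a measurable $a\geq\varepsilon$ on $F$, and a measurable point map $\tau:F\to\T$ such that $a(x)\delta_{\tau(x)}\leq\beta_x^R$ for $x\in F$. We define the positive operator
\[
 Sf(x)=1_F(x)\,a(x)\,f(\tau(x)).
\]
This is well defined on $L^1$-classes only if $\tau$ is nonsingular. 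The inequality $\int_A S1_B\,dm\leq\Gamma_R(A\times B)$ gives this: if $m(B)=0$, then $\Gamma_R(\T\times B)=\int_BR^*1\,dm=0$ by \eqref{eq:marginals}, so $S1_B=0$ a.e. Hence $m(\tau^{-1}B\cap F)=0$, and $S$ is well defined. The same inequality, together with \eqref{eq:opmeasure} and a monotone-class argument, gives $0\leq S\leq R$ as operators, since $\int gSf\,dm\leq\int gRf\,dm$ for $f,g\geq0$. By the solidity of $\D$ (Bourgain), it follows that $S\in\D$.

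Next we show that $S$ is not Dunford--Pettis, which is the contradiction. Push $1_F\,dm$ forward by $\tau$ to get a measure $\nu$; by nonsingularity $\nu=w\,dm$ with $w\in L^1$. First shrink $F$ so that $a\leq M$ and the pullback weights are controlled. Then choose a weakly null sequence $(f_n)$ with $|f_n|=1$ on a set $B$ with $m(\tau^{-1}B\cap F)>0$. Rademacher functions, or characters $e_n$, are natural candidates. We have $\|Sf_n\|_1=\int_F a\,|f_n\circ\tau|\,dm\geq\varepsilon\, m(F\cap\tau^{-1}B)>0$ if $|f_n|=1$ everywhere. Using characters $f_n=e_n$, which are weakly null, gives $|Sf_n|=1_Fa$, so $\|Sf_n\|_1\geq\varepsilon m(F)>0$. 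This contradicts the Dunford--Pettis property. Thus the characters alone suffice, and no analysis of $\tau$ beyond nonsingularity is needed.

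For $\alpha_y^R$, apply the same argument to the transposed picture. Selecting atoms of $\alpha^R_y$ on a set $G$ of positive measure gives $b\geq\varepsilon$ and a measurable map $\rho$ with $b(y)\delta_{\rho(y)}\leq\alpha_y^R$. Define the candidate
\[
 S'f(x)=\text{the density of the pushforward of }b f\,dm|_G\text{ under }\rho.
\]
This is a positive operator dominated by $R$, by the same test against $1_A\times1_B$. We need the pushforward to be absolutely continuous, which follows from $\Gamma_R(A\times\T)=\int_AR1\,dm$. Thus $S'\in\D$. To contradict this, test on $f_n=e_n 1_G$: then $\|S'f_n\|_1$ equals the variation of $\rho_*(b e_n\,dm|_G)$. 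This need not stay bounded below, because of cancellation when $\rho$ is many-to-one. This is the main obstacle. We try first to shrink $G$ so that $\rho$ is injective on it, using a measurable-partition argument (countably many Borel pieces on which $\rho$ is injective, by Lusin--Souslin type selection). On such a piece no cancellation occurs, so $\|S'f_n\|_1=\int_G b\,dm\geq\varepsilon m(G)$, which again contradicts the Dunford--Pettis property. If injectivity cannot be arranged, the fallback is to use duality: the argument for $\beta$ applies to $R^*$ on $L^\infty$ acting through $L^1$-preduals. Alternatively, note that $S'$ is the adjoint-type counterpart of a composition operator, and apply the first case to a suitable restriction.
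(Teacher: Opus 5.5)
The $\beta^R$ case is correct and is essentially the paper's argument. The graph operator is dominated by $R$, so it is Dunford--Pettis by solidity. Any weakly null unimodular sequence then gives $\|Sf_n\|_1=\int_Fa\,dm>0$; you use characters, the paper uses Rademacher functions.

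There is a genuine gap in the $\alpha^R$ case, which is not symmetric to the first. Your main route, shrinking $G$ until $\rho$ is injective, cannot work. At that point you know only that $\rho$ is measurable and $\rho_*(b1_Gm)\ll m$, and this does not give injectivity on any set of positive measure. For a counterexample, identify $(\T,m)$ with the Lebesgue unit square by a measure isomorphism. Let $\rho$ be the first-coordinate projection, followed by $[0,1]\to\T$. Then $\rho_*m=m$, but by Fubini $\rho$ is injective on no set of positive measure. This is exactly what the atom selection returns for the contraction $Rf=d\rho_*(fm)/dm$, where $\alpha_y^R=\delta_{\rho(y)}$. That $R$ is not Dunford--Pettis, but your partition argument never uses the Dunford--Pettis hypothesis, so no such argument exists. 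Lusin--Souslin only says that injective Borel images are Borel, and Lusin--Novikov only splits countable-to-one maps into injective pieces.

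Your fallbacks are not arguments as stated. The transposed measure has input marginal $R1\,dm$, so it need not define a bounded operator on $L^1$. The $\beta$-argument also does not run for $R^*$ on $L^\infty$, where no unimodular sequence is weakly null.

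The missing idea is to pull the test sequence back through $\rho$, so that the pushforward has nothing to cancel. Put $\eta=\rho_*(b1_Gm)$, which you already know is nonzero and absolutely continuous. Set $f_n=1_G\,(r_n\circ\rho)$ with $(r_n)$ unimodular. Then $\rho_*(b1_Gf_nm)=r_n\eta$, so $S'f_n=r_n\,d\eta/dm$. Hence $\|S'f_n\|_1=\eta(\T)\geq\varepsilon m(G)$, whatever the multiplicity of $\rho$.

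For weak nullity, given $h\in L^\infty(m)$ write $\int hf_n\,dm=\int r_n\,d\lambda_h$ with $\lambda_h=\rho_*(h1_Gm)$. Since $b\geq\varepsilon$ on $G$, one has $|\lambda_h|\leq\varepsilon^{-1}\|h\|_\infty\eta$. The paper takes $(r_n)$ to be Rademacher functions for the atomless measure $\eta$. Because $\lambda_h\ll\eta\ll m$, your characters $r_n=e_n$ also work, by the Riemann--Lebesgue lemma. This contradicts $S'\in\D$ and closes the second case.
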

\begin{proof}
Suppose first that $\beta_x^R$ has atoms on a set of positive measure. Select $F,a,\tau$ as above. The graph submeasure
\[
 d\Gamma_A(x,y)=1_F(x)a(x)\,dm(x)\,d\delta_{\tau(x)}(y)
\]
is dominated by $\Gamma_R$ and defines a positive bounded operator $0\leq A\leq R$, namely
\[
 Af(x)=1_F(x)a(x)f(\tau(x)).
\]
For clarity, any positive submeasure of $\Gamma_R$ defines such an operator: its input marginal is at most $\|R\|m$, and its output marginal is absolutely continuous with respect to $m$. Integration against an input $f\in L^1$ therefore gives an absolutely continuous output measure with variation at most $\|R\|\|f\|_1$. This also ensures that the displayed formula is well defined on $L^1$ classes.

Let $(r_n)$ be Rademacher functions on $(\T,m)$, with Borel representatives of modulus one. They are weakly null in $L^1(m)$, whereas
\[
 \|Ar_n\|_1=\int_F a\,dm>0.
\]
This contradicts the solid-sublattice property and $R\in\D$.

Suppose instead that $\alpha_y^R$ has an atomic part on a set of positive measure. A selection now gives a positive operator $0\leq A\leq R$ with
\begin{equation}\label{eq:reversegraph}
 \int gAf\,dm=\int_F a(y)f(y)g(\tau(y))\,dm(y).
\end{equation}
Put $\eta=\tau_*(a1_Fm)$. This is a nonzero measure absolutely continuous with respect to $m$, by the output marginal domination. In particular, $\eta$ is atomless. Choose Rademacher functions $(r_n)$ for the finite atomless measure $\eta$ and set
\[
 f_n(y)=1_F(y)r_n(\tau(y)).
\]
Then $f_n\rightharpoonup0$ in $L^1(m)$. Indeed, for $h\in L^\infty(m)$, the measure $\lambda_h=\tau_*(h1_Fm)$ satisfies
\[
 |\lambda_h|\leq\varepsilon^{-1}\|h\|_\infty\eta.
\]
Thus $\int hf_n\,dm=\int r_n\,d\lambda_h\to0$, since $r_n\rightharpoonup0$ in $L^1(\eta)$. If $q=d\eta/dm$, equation~\eqref{eq:reversegraph} gives $Af_n=qr_n$ and hence
\[
 \|Af_n\|_1=\eta(\T)>0.
\]
This again contradicts $A\in\D$.
\end{proof}

\begin{lemma}\label{lem:symbol}
Let $\Delta(x,y)=x-y$. For $R\geq0$,
\[
 \sigma(R)=\Delta_*\Gamma_R.
\]
For positive $S,T\in\B(X)$,
\begin{equation}\label{eq:product-symbol}
 \sigma(ST)=\int_\T\alpha_z^S*\check\beta_z^T\,dm(z).
\end{equation}
\end{lemma}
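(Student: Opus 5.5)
We test both identities against trigonometric polynomials, or equivalently against characters. A measure on $\T$ is determined by its Fourier coefficients, so it suffices to compute $\widehat{\sigma(\cdot)}(n)$ in each case. The basic computation is
\[
 E(T)e_n=\widehat{\sigma(T)}(n)\,e_n .
\]
Since $U_se_n=e^{-2\pi ins}e_n$ and $U_{-s}(g)(x)=g(x+s)$, pairing with $\bar e_n$ gives
\[
 \widehat{\sigma(T)}(n)=\int_\T \bar e_n\,E(T)e_n\,dm
 =\int_\T\int_\T \overline{e_n(x+s)}\,(Te_n)(x+s)\,dm(x)\,dm(s)
 =\int_\T \bar e_n\,Te_n\,dm .
\]
In the middle expression we pull the factor $e^{-2\pi ins}$ through and then use translation invariance of $m$. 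The interchange of the pairing with the Bochner integral is legitimate because $\bar e_n\in L^\infty$ defines a bounded functional on $X$.

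For the first identity, we apply \eqref{eq:opmeasure} with $g=\bar e_n$ and $f=e_n$, which are bounded Borel functions. This gives
\[
 \widehat{\sigma(R)}(n)=\int_{\T^2}e^{-2\pi in(x-y)}\,d\Gamma_R(x,y)=\widehat{\Delta_*\Gamma_R}(n).
\]
Fourier uniqueness on $M(\T)$ then yields $\sigma(R)=\Delta_*\Gamma_R$.

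For \eqref{eq:product-symbol}, we set $R=ST$, which is positive, and compute $\int\bar e_n\,STe_n\,dm$ by splitting at the intermediate variable $z$.

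1. Write $h=Te_n\in L^\infty$, with $\|h\|_\infty\le\|T1\|_\infty$ only if $T1$ is bounded. To avoid that assumption, we instead use the kernels directly.
2. By \eqref{eq:disint} for $T$, we have $Te_n(z)=\int e_n(y)\,d\beta_z^T(y)$ for a.e.\ $z$. This follows by testing \eqref{eq:opmeasure} with $g=1_A$ and $f=e_n$, which identifies the density of $A\mapsto\int_A Te_n\,dm$.
3. By \eqref{eq:disint} for $S$, the identity $\int \bar e_n\,S h\,dm=\int h(z)\int \bar e_n(x)\,d\alpha_z^S(x)\,dm(z)$ holds for bounded Borel $h$. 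We extend it to $h=Te_n\in L^1$ by truncation, or by a monotone argument applied to $|h|\le T1$. The extension is justified because both sides are dominated by $\int (T1)(z)\,\alpha_z^S(\T)\,dm(z)\le\|S\|\,\|T1\|_1<\infty$, using $\alpha_z^S(\T)=S^*1(z)\le\|S\|$.
4. Combining steps 2 and 3 gives
\[
 \widehat{\sigma(ST)}(n)=\int_\T\Bigl(\int e^{-2\pi inx}\,d\alpha_z^S(x)\Bigr)\Bigl(\int e^{2\pi iny}\,d\beta_z^T(y)\Bigr)dm(z).
\]
For each $z$, the integrand is exactly $\widehat{\alpha_z^S*\check\beta_z^T}(n)$, since reflection conjugates the character.

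The right-hand side of \eqref{eq:product-symbol} is a weak-star integral. Its total mass is at most $\int\alpha_z^S(\T)\,\beta_z^T(\T)\,dm\le\|S\|\,\|T1\|_1$, so it is a finite measure whose Fourier coefficients are the displayed integral. This requires measurability of $z\mapsto$ (pairings with continuous functions), which holds for disintegration kernels. Fourier uniqueness finishes the proof.

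The main obstacle is step 3. The point is to justify that the $\alpha$-disintegration of $\Gamma_S$ may be integrated against the non-bounded $L^1$ function $Te_n$, and that the kernel identities hold on the same full-measure set of base points $z$. We handle this by first proving the identity for $h=1_B$ and simple functions, then passing to $h\in L^1$ by dominated convergence with dominating function $S^*1\cdot|h|\le\|S\|\,|h|$, and finally substituting the representation from step 2.
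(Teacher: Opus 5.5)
Your proof is correct, but it is organized differently from the paper's. You reduce both identities to the matrix-coefficient formula $\widehat{\sigma(T)}(n)=\int_\T\bar e_n\,Te_n\,dm$, which holds for every $T\in\B(X)$, and then appeal to Fourier uniqueness. The first identity becomes a single evaluation of \eqref{eq:opmeasure} at $(g,f)=(\bar e_n,e_n)$, and the product formula only requires composing the two kernels against one character.

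The paper instead tests $E(R)$ against arbitrary bounded $f,g$ and substitutes $u=x-s$. This exhibits $E(R)$ directly as convolution by $\Delta_*\Gamma_R$, with no Fourier analysis. For the product it proves the stronger structural statement $d\Gamma_{ST}(x,y)=\int_\T d\alpha^S_z(x)\,d\beta^T_z(y)\,dm(z)$, i.e.\ the operator measure of $ST$ is the composition of the two disintegrations. It then obtains \eqref{eq:product-symbol} by applying the first identity to $R=ST$ and pushing forward by $\Delta$. Your route is more economical and gives a symbol formula for non-positive $T$ for free. The paper's route avoids Fourier uniqueness and yields the identification of $\Gamma_{ST}$ itself.

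The difficulty you single out in step~3 is what the paper's phrase ``by duality'' covers. One has $\int g\,S(Tf)\,dm=\int (S^*g)(Tf)\,dm$ with $S^*g(z)=\int g\,d\alpha^S_z$ a.e., and that identity needs only indicator test functions. So your truncation and dominated-convergence extension is correct but avoidable.

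Two small remarks. Step~1 is a false start, since positivity only gives $|Te_n|\le T1\in L^1$, and it should be deleted. The measurability of $z\mapsto\int\phi\,d(\alpha^S_z*\check\beta^T_z)$ for continuous $\phi$, which you assert without argument, follows by approximating $\phi$ uniformly by trigonometric polynomials.
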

\begin{proof}
For bounded nonnegative $f,g$, averaging \eqref{eq:opmeasure} gives
\begin{align*}
 \int gE(R)f\,dm
 &=\int_\T\int_{\T^2}g(x-s)f(y-s)\,d\Gamma_R(x,y)\,dm(s)\\
 &=\int_{\T^2}\int_\T g(u)f(u-(x-y))\,dm(u)\,d\Gamma_R(x,y).
\end{align*}
This identifies the convolution symbol as $\Delta_*\Gamma_R$.

For the product, use the reverse kernel of $S$ and the forward kernel of $T$. The equality
\[
 \int gSTf\,dm
 =\int_\T\left(\int_\T g(x)\,d\alpha_z^S(x)\right)
             \left(\int_\T f(y)\,d\beta_z^T(y)\right)dm(z)
\]
holds first for bounded nonnegative $f,g$, by duality and disintegration. The corresponding measure is
\[
 d\Gamma_{ST}(x,y)=\int_\T d\alpha_z^S(x)\,d\beta_z^T(y)\,dm(z).
\]
Its total mass is finite because
\[
 \int_\T\alpha_z^S(\T)\beta_z^T(\T)\,dm(z)
 \leq\|S\|\int_\T T1\,dm\leq\|S\|\|T\|.
\]
Pushing this measure forward by $\Delta$ proves \eqref{eq:product-symbol}.
\end{proof}

\begin{proposition}\label{prop:annihilation}
For every $S,T\in\D$, $\restr{\sigma(ST)}{P}=0$.
\end{proposition}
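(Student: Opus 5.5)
Reduce first to positive $S,T$, and then apply the polar-set calculation fiberwise through the product formula \eqref{eq:product-symbol}.

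\emph{Step 1 (reduction to positive factors).} Split a complex $S\in\D$ into real and imaginary parts, each real part into positive and negative parts. By the solid-sublattice property recalled in Section~\ref{sec:kernels} (Bourgain, Liu), each of the four pieces is a positive Dunford--Pettis operator. So $S=\sum_{j}c_jS_j$ and $T=\sum_k d_kT_k$ with $c_j,d_k\in\{\pm1,\pm i\}$ and $S_j,T_k\geq0$ in $\D$. Since $\sigma$ is linear and restriction to $P$ is linear, it suffices to show $\restr{\sigma(S_jT_k)}{P}=0$ for each pair. Hence we may assume $S,T\geq0$.

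\emph{Step 2 (fiberwise annihilation).} For positive $S,T\in\D$, Lemma~\ref{lem:symbol} gives
\[
 \sigma(ST)(P)=\int_\T(\alpha_z^S*\check\beta_z^T)(P)\,dm(z).
\]
Here the measure-valued integral is evaluated on the Borel set $P$. This is legitimate because the formula for $\Gamma_{ST}$ holds for bounded Borel $f,g$, so it extends to indicator functions of Borel subsets of $\T^2$, in particular $\Delta^{-1}(P)$. By Lemma~\ref{lem:atomless}, for almost every $z$ both $\alpha_z^S$ and $\beta_z^T$ are finite positive atomless measures. Lemma~\ref{lem:polar}, in its reflected form, then gives $(\alpha_z^S*\check\beta_z^T)(P)=0$ for almost every $z$. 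The integrand is measurable in $z$: it equals $\int 1_P(x-y)\,d\alpha_z^S(x)\,d\beta_z^T(y)$, and measurability follows from the measurability of the kernels by a monotone-class argument. Therefore $\sigma(ST)(P)=0$. Since $\sigma(ST)\geq0$, every Borel subset of $P$ also has measure zero, so the restriction vanishes.

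\emph{Expected obstacle.} The point needing care is the measure-theoretic justification in Step 2. One must check that evaluating \eqref{eq:product-symbol} at $P$ really equals the integral of the fiber masses, rather than the weak-star identity holding only for continuous test functions. I would handle this directly with the kernel formula $\Gamma_{ST}(B)=\int\!\!\int\!\!\int 1_B(x,y)\,d\alpha_z^S\,d\beta_z^T\,dm(z)$ for Borel $B\subseteq\T^2$. This holds for rectangles by the stated equality and extends by a $\pi$--$\lambda$ argument, using the finiteness of the total mass bound from Lemma~\ref{lem:symbol}. Taking $B=\Delta^{-1}(P)$ and pushing forward by $\Delta$ then gives the identity above.
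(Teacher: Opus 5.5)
Your proposal is correct and follows the paper's proof essentially step for step: reduction to positive Dunford--Pettis factors via the solid-sublattice property and bilinearity, then the product formula \eqref{eq:product-symbol}, almost-everywhere atomlessness of $\alpha_z^S$ and $\beta_z^T$ from Lemma~\ref{lem:atomless}, the reflected form of Lemma~\ref{lem:polar}, and positivity to pass from $\sigma(ST)(P)=0$ to vanishing restriction. Your $\pi$--$\lambda$ justification for evaluating the measure-valued integral on $\Delta^{-1}(P)$ spells out what the paper compresses into an appeal to Tonelli's theorem.
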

\begin{proof}
If $S,T\geq0$, both factors in the integrand of \eqref{eq:product-symbol} are atomless almost everywhere, by Lemma~\ref{lem:atomless}. Lemma~\ref{lem:polar} and Tonelli's theorem give $\sigma(ST)(P)=0$. Positivity makes this equivalent to zero restriction. Decompose arbitrary real Dunford--Pettis operators into their positive and negative parts. For complex operators decompose their real and imaginary parts as well. The solid-sublattice property ensures all resulting positive operators remain Dunford--Pettis. Bilinearity then proves the assertion.
\end{proof}

\section{The square ideal and its quotient}\label{sec:square}

We first check explicitly how representable operators enter the construction. The Lewis--Stegall representation/factorization theorem identifies $\G$ with the operators factoring through $\ell^1$; see \cite{DU} and the discussion at the beginning of \cite[Section~3]{JS}. A representable $G$ has a strongly measurable essentially bounded kernel $k:\T\to L^1(\T)$ with
\[
 Gf=\int_\T f(y)k(y)\,dm(y).
\]

\begin{lemma}\label{lem:representable}
One has $\sigma(\G)\subseteq L^1(\T)m$, and
\[
 C_\mu\in\G\quad\Longleftrightarrow\quad\mu\ll m.
\]
Also, $\G\subseteq\I_2$.
\end{lemma}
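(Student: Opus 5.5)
We prove the three assertions in order: absolute continuity of symbols of representable operators, the characterization of representable convolution operators, and the inclusion $\G\subseteq\I_2$.

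\emph{Step 1: $\sigma(\G)\subseteq L^1(\T)m$.} Let $G\in\G$ have kernel $k:\T\to L^1(\T)$, strongly measurable with $\sup_y\|k(y)\|_1<\infty$. Then
\[
 U_{-s}GU_sf=\int_\T f(y)\,U_{-s}k(y+s)\,dm(y),
\]
and hence $E(G)f=\int_\T f(y)\kappa(y)\,dm(y)$ with $\kappa(y)=\int_\T U_{-s}k(y+s)\,dm(s)$. This is a Bochner integral in $L^1$, because $s\mapsto U_{-s}k(y+s)$ is strongly measurable and bounded. The function $\kappa$ satisfies $\kappa(y+t)=U_t\kappa(y)$. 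Testing $E(G)$ on an approximate identity $h_j$ concentrated at $0$ gives $E(G)h_j\to\kappa(0)$ in $L^1$ along a suitable sequence, using Lebesgue differentiation for Bochner functions at a point $y_0$ and then translating. Comparing with the weak-star limit computation in Lemma~\ref{lem:convolution}, we get $\sigma(G)=\kappa(0)\,m$, up to a translate, which is absolutely continuous. A cleaner route is to fix $g\in L^\infty$ and show that $\langle g,\sigma(G)\rangle=\int\langle U_{-y}g,\kappa(y)\rangle\,dm(y)$, where the right-hand side is $\int g\cdot\rho\,dm$ with $\rho=\int U_y\kappa(y)\,dm(y)\in L^1$. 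I would use this Fubini formulation to avoid the pointwise issues.

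\emph{Step 2: the characterization.} If $C_\mu\in\G$, then $\mu=\sigma(C_\mu)\ll m$ by Step 1. Conversely, if $\mu=hm$ with $h\in L^1$, first take $h$ a trigonometric polynomial. Then $C_\mu$ has finite rank, and is therefore representable. For general $h$, approximate $h$ in $L^1$ by trigonometric polynomials $h_j$. By Lemma~\ref{lem:convolution}, $\|C_{hm}-C_{h_jm}\|=\|h-h_j\|_1$, and $\G$ is closed, so $C_{hm}\in\G$. Alternatively, $C_{hm}$ has the continuous kernel $y\mapsto U_yh$, which is directly representable.

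\emph{Step 3: $\G\subseteq\I_2$.} Every $G\in\G$ factors as $G=VW$ with $W:X\to\ell^1$ and $V:\ell^1\to X$. Choose $U:\ell^1\to X$ and $Q:X\to\ell^1$ with $QU=\Id_{\ell^1}$, as in the proof of Corollary~\ref{cor:large}. Concretely, $U$ maps the unit vectors to normalized indicators of disjoint intervals and $Q$ is the conditional expectation read off as coefficients, so $QU=\Id_{\ell^1}$. Then
\[
 G=(VQ)(UW),
\]
and both $VQ$ and $UW$ factor through $\ell^1$, so both lie in $\G\subseteq\D$. Hence $G\in\I_2$.

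The main obstacle is Step 1: the measure-theoretic identification of $\sigma(G)$ as an $L^1$ density. I expect the duality-plus-Fubini computation to handle it once we verify that $(s,y)\mapsto k(y+s)$ is jointly strongly measurable. This follows from strong measurability of $k$ composed with the measurable map $(s,y)\mapsto y+s$.
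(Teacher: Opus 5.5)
Your proposal is correct and follows essentially the paper's route. The Fubini interchange in the translation average exhibits $E(G)$ as convolution with $\kappa(0)=\int_\T U_{-t}k(t)\,dm(t)$, which is exactly the paper's density $h(r)=\int_\T k(t,t+r)\,dm(t)$. Your Step~2 (using $\sigma(C_\mu)=\mu$ and the norm-continuous kernel $y\mapsto U_yh$) and your Step~3 ($G=(VQ)(UW)$) coincide with the paper's arguments.

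Your own covariance identity $\kappa(y)=U_y\kappa(0)$ already gives $E(G)=C_{\kappa(0)m}$ exactly, with no translate and no Lebesgue differentiation needed. The ``cleaner'' formula, however, has a sign slip: it should read $\langle g,\sigma(G)\rangle=\int_\T\langle U_yg,\kappa(y)\rangle\,dm(y)$. As written, with $U_{-y}g$, it yields the constant density $\int_\T U_{2y}\kappa(0)\,dm(y)$.
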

\begin{proof}
Choose a jointly measurable representative $k(y,x)$ of the Bochner kernel. It is integrable on $\T^2$. A substitution in \eqref{eq:average}, justified by Fubini, gives
\[
 E(G)=C_{hm},\qquad h(r)=\int_\T k(t,t+r)\,dm(t),
 \qquad \|h\|_1\leq\int_\T\|k(t)\|_1\,dm(t).
\]
The changes of variables preserve product Haar measure, so the formula is independent of representatives. This proves the first assertion. If $\mu=hm$, its convolution operator is represented by the norm-continuous family of translates of $h$. Conversely, a representable convolution operator is fixed by $E$ and therefore has absolutely continuous symbol.

For the inclusion in the square ideal, take contractions $U:\ell^1\to X$ and $V:X\to\ell^1$ with $VU=\Id_{\ell^1}$. For example, choose disjoint measurable sets $A_n$ of positive measure and put
\[
 Ue_n=\frac{1_{A_n}}{m(A_n)},\qquad Vf=\left(\int_{A_n}f\,dm\right)_n.
\]
If $G=AB$ with $B:X\to\ell^1$ and $A:\ell^1\to X$, then
\[
 G=(AV)(UB).
\]
Both factors belong to $\G\subseteq\D$. This proves $\G\subseteq\I_2$, even without taking a closure at this step.
\end{proof}

\begin{lemma}\label{lem:powers}
For every $n\geq1$, the compact sumset $nP=P+\cdots+P$ has Haar measure zero. If $\mu$ is a Rajchman probability supported on $P$, every convolution power $\mu^{*n}$ is singular and Rajchman.
\end{lemma}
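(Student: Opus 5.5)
We need $m(nP)=0$ for every $n$, together with the statement that $\mu^{*n}$ is singular and Rajchman. The Rajchman part is immediate: by Lemma~\ref{lem:rajchman}, $M_0(\T)$ is a convolution ideal, so $\mu^{*n}\in M_0(\T)$. Moreover $\supp\mu^{*n}\subseteq nP$, because the support of a convolution lies in the closure of the sumset of the supports, and $nP$ is compact as the continuous image of $P^n$. Singularity therefore follows once we know $m(nP)=0$. The whole statement thus reduces to the Haar-null claim for the sumsets.

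For $n=1$, Lemma~\ref{lem:polar} already gives $m(P)=0$. For general $n$ I will use strong independence directly, through a uniqueness-of-representation argument. Consider the addition map
\[
 \Sigma_n\colon P^n\to nP,\qquad (p_1,\dots,p_n)\mapsto p_1+\dots+p_n .
\]
Since the points of $P$ are $\Z$-linearly independent ($k(P)=\infty$), a sum $p_1+\dots+p_n$ determines the multiset $\{p_1,\dots,p_n\}$. Indeed, if two such sums agree, collect coefficients; independence forces the coefficient of each distinct point to vanish, so the two multisets coincide. Hence each fibre of $\Sigma_n$ is finite, with at most $n!$ points.

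Suppose for contradiction that $m(nP)>0$, and take the least such $n\ge 2$. I plan to derive a contradiction from the polar-set calculation. We have
\[
 nP=(n-1)P+P \quad\text{and}\quad m\bigl((n-1)P\bigr)=0,
\]
and I would like an estimate of the form $m(A+P)=0$ whenever $A$ is a null set. That estimate is false in general, so I will not use it. Instead I want to mimic Lemma~\ref{lem:polar} with Haar measure itself. If $m(nP)>0$, then $m*m$ restricted to… this also fails, since $m*m=m$.

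The cleaner route, and the step I expect to be the main obstacle, is Fourier-analytic or Steinhaus-type. If $m(nP)>0$, then by Steinhaus the set $nP-nP=2n(P-P)$-type sums contain an interval around $0$. An interval in $\T$ contains points with many representations, contradicting the finiteness of the representations from the previous paragraph. Concretely, $nP-nP$ consists of sums $\sum\pm p_j$ with $2n$ terms, and independence bounds each point's representations by $(2n)!$. I would then need a counting or measure argument showing that a set with uniformly bounded representations cannot contain an interval. To get it, I would consider the pushforward $\nu=(\Sigma_{2n})_*\bigl(\mu^{\otimes n}\otimes\check\mu^{\otimes n}\bigr)$ and use that it is supported on $nP-nP$.

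If that becomes messy, my fallback is the classical fact, which is Varopoulos's result and is stated in \cite{Gh12}, that for an independent compact perfect set every finite sumset $nP$ is of Haar measure zero; in fact such sets are Kronecker/Helson-type and finite sums are null. The first thing I would attempt, though, is to reduce the claim to Lemma~\ref{lem:polar}-style countability. One fixes $x\in(n-1)P$, observes that the translates $x+P$ meet pairwise in at most finitely many points by independence (the analogue of \eqref{eq:intersection}), and combines this with Fubini against the atomless measures $\mu^{*(n-1)}$ and $\mu$ to conclude that $nP$ is null.
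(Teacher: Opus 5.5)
\textbf{There is a genuine gap.} Your reduction is correct and matches the paper. The Rajchman property of $\mu^{*n}$, the inclusion $\supp\mu^{*n}\subseteq nP$, and the deduction of singularity from $m(nP)=0$ are all fine, and $n=1$ is covered by Lemma~\ref{lem:polar}. The one claim with real content, $m(nP)=0$ for $n\geq2$, is never proved, and none of your routes closes it.

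\emph{Steinhaus route.} The bound of $(2n)!$ representations for points of $nP-nP$ is false, because cancellations are allowed. For instance $0=p-p$ for every $p\in P$, and every point of $(n-1)P-(n-1)P$ has infinitely many representations in $nP-nP$, obtained by adjoining $+p-p$. So ``an interval contains points with many representations'' contradicts nothing, and the counting lemma you say you would need is left open.

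\emph{Fubini route.} Your last plan integrates against $\mu^{*(n-1)}$ and $\mu$. That gives information about $\mu$-type measures of sets, not about Haar measure; indeed $\mu^{*n}(nP)=1$. A Lemma~\ref{lem:polar}-style argument for $m$ would need the translates $x+nP$ to be pairwise $m$-almost disjoint. The finite-intersection property you invoke is for $x+P$, and it fails for $nP$: if $x-y\in P-P$, then $(x+nP)\cap(y+nP)$ contains a translate of $(n-1)P$.

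\emph{Fallback.} Citing the result as classical is not a proof.

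\textbf{The missing idea is the torsion of $\T$.} Since $k(P)=\infty$, the subgroup $H$ generated by $P$ is torsion-free. If $h=\sum n_jp_j$ has finite order $N$, then $\sum Nn_jp_j=0$ forces every $n_j=0$. Hence $H\cap\mathbb Q/\mathbb Z=\{0\}$. Since $nP\subseteq H$, the translates $q+nP$ for distinct $q\in\mathbb Q/\mathbb Z$ lie in distinct cosets of $H$ and are pairwise disjoint. They are infinitely many compact sets in $\T$, all of the same Haar measure $m(nP)$, which forces $m(nP)=0$. This is the paper's argument.

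Your Steinhaus route can be rescued with the same fact. An interval around $0$ contains nonzero rational points, but $nP-nP\subseteq H$ contains no nonzero torsion points. Uniqueness of representations alone does not carry the proof; what does is that independence in $\T$ keeps $H$ away from the torsion subgroup $\mathbb Q/\mathbb Z$.
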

\begin{proof}
Strong independence with $k(P)=\infty$ implies that the subgroup $H$ generated by $P$ is torsion-free. Indeed, multiplying a finite integer representation of a torsion element by its order gives an integer relation among distinct points of $P$, so all coefficients vanish. Distinct elements $q,r$ of the torsion subgroup $\mathbb Q/\mathbb Z$ therefore give disjoint cosets $q+H$ and $r+H$. Consequently the countably many translates $q+nP$ are pairwise disjoint and have the same Haar measure. This forces $m(nP)=0$. The measure $\mu^{*n}$ is a probability supported on $nP$, and is Rajchman since $\widehat{\mu^{*n}}(k)=\widehat\mu(k)^n$.
\end{proof}

\begin{proof}[Proof of Theorem~\ref{thm:square}]
Define
\[
 \Theta:\D\longrightarrow M_0(P),\qquad
 \Theta(T)=\restr{\sigma(T)}{P}.
\]
This is a contraction by Proposition~\ref{prop:average} and Lemma~\ref{lem:rajchman}. Proposition~\ref{prop:annihilation} shows that it vanishes on $\I_2$. It satisfies $\Theta(C_\mu)=\mu$ for $\mu\in M_0(P)$. Hence, for every $W\in\I_2$,
\[
 \|C_\mu-W\|\geq\|\Theta(C_\mu-W)\|=\|\mu\|.
\]
Taking $W=0$ and using Lemma~\ref{lem:convolution} proves \eqref{eq:exact-distance}, and in particular $\I_2\ne\D$.

The set $\I_2$ is a closed two-sided ideal because $\D$ is such an ideal. Lemma~\ref{lem:representable} gives $\G\subseteq\I_2$. For a Rajchman probability $\mu$ supported on $P$, Lemma~\ref{lem:powers} shows that $\mu^{*2}$ is singular. Thus
\[
 C_\mu^2=C_{\mu^{*2}}\in\I_2\setminus\G,
\]
which proves the other strict inclusion. The same argument shows $C_\mu^n\notin\G$ for every $n\geq1$.

The map $\Theta$ induces a contraction $\widetilde\Theta:\D/\I_2\to M_0(P)$. Let
\[
 J\mu=C_\mu+\I_2.
\]
Then $J$ is contractive and $\widetilde\Theta J=\Id$, so $J$ is isometric and $J\widetilde\Theta$ is a contractive projection onto its range.

Finally fix the Rajchman probability $\mu$ on $P$. The map $f\mapsto f\mu$ is an isometry from $L^1(\mu)$ into $M_0(P)$, by Lemma~\ref{lem:rajchman}. Lebesgue decomposition relative to $\mu$ gives a contractive projection of $M_0(P)$ onto this subspace: the absolutely continuous part of any $\nu\in M_0(P)$ is again Rajchman by the same lemma. Since $\mu$ is atomless on a compact metric space, $L^1(\mu)$ is isometrically isomorphic to $L^1[0,1]$. Composing the projections gives the last assertion.
\end{proof}

\begin{corollary}\label{cor:quotient}
For $D=\D/\G$, one has $0\ne D^{[2]}\subsetneq D$. Neither $D$ nor $\D$ has a one-sided approximate identity. Moreover, $D/D^{[2]}$ has zero multiplication and contains the complemented subspaces described in Theorem~\ref{thm:square}.
\end{corollary}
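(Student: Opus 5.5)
The plan is to transfer everything already proved for $\D$ and $\I_2$ to the quotient algebra $D=\D/\G$, using the canonical identifications stated after Theorem~\ref{thm:square}. Write $q:\D\to D$ for the quotient map. Since $\G\subseteq\I_2$ by Lemma~\ref{lem:representable}, I will first check that $D^{[2]}=q(\I_2)$. The inclusion $\supseteq$ holds because $q$ is continuous and maps products $ST$ to $q(S)q(T)$. For $\subseteq$, note that $q(\I_2)$ is closed in $D$: its preimage $\I_2+\G=\I_2$ is closed. It also contains all products $q(S)q(T)$. Then $D/D^{[2]}\cong\D/\I_2$ isometrically, by the standard third-isomorphism argument for quotient norms.

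With this in hand, $0\neq D^{[2]}$ follows from Theorem~\ref{thm:square}: $C_\mu^2\in\I_2\setminus\G$, so $q(C_\mu^2)\neq0$. Similarly $D^{[2]}\neq D$ follows from $\I_2\neq\D$ together with $\G\subseteq\I_2$. Zero multiplication in $D/D^{[2]}$ is immediate, since every product of two elements of $D$ lies in $D^{[2]}$. The complemented copies of $M_0(P)$ and $L^1[0,1]$ transfer through the isometry $D/D^{[2]}\cong\D/\I_2$.

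For approximate identities, $\D$ is handled by Theorem~\ref{thm:ai}. For $D$, I will use the functional $\Theta$ from the proof of Theorem~\ref{thm:square}. It vanishes on $\I_2\supseteq\G$, so it factors through a contraction $\Theta_D:D\to M_0(P)$ that kills $D^{[2]}$. Let $r=q(R)$ with $R=C_\mu$. For any $e\in D$, pick a representative $T\in\D$ with $q(T)=e$. Then
\[
\|r-re\|\geq\|\Theta_D(r-re)\|=\|\Theta(R)-\Theta(RT)\|=\|\mu\|=1,
\]
because $RT\in\I_2$. The same bound holds for $\|r-er\|$. So no net in $D$ can be a one-sided approximate identity.

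The only delicate point is the closedness claim $q(\I_2)=D^{[2]}$, that is, showing that the image of a closed ideal containing $\G$ is closed. This reduces to the elementary fact that $\I_2+\G=\I_2$ is closed, so I do not expect a real obstacle.
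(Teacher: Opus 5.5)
Your proof is correct. The structural part ($D^{[2]}=q(\I_2)=\I_2/\G$, the isometry $D/D^{[2]}\cong\D/\I_2$, the strict inclusions via $C_\mu^2\in\I_2\setminus\G$ and $\I_2\neq\D$, zero multiplication, and the transfer of the complemented subspaces) matches the paper. Your check that $q(\I_2)$ is closed, because $q^{-1}(q(\I_2))=\I_2+\G=\I_2$, makes explicit what the paper only asserts.

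You differ from the paper on the approximate-identity claim. The paper uses an abstract observation: a one-sided approximate identity $(e_\alpha)$ in a Banach algebra $J$ forces $J^{[2]}=J$, since every $a=\lim_\alpha ae_\alpha$ (or $\lim_\alpha e_\alpha a$) lies in the closed span of products. The strict inclusions $D^{[2]}\subsetneq D$ and $\I_2\subsetneq\D$ then dispose of $D$ and $\D$ at once, with no further use of $\Theta$.

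You instead factor $\Theta$ through $D$ and rerun the proof of Theorem~\ref{thm:ai} in the quotient. This is slightly longer but gives more. Take the fixed element $r=q(C_\mu)$, which has norm one in $D$ because $1=\dist(C_\mu,\I_2)\leq\dist(C_\mu,\G)\leq1$. You get $\|r-re\|\geq1$ and $\|r-er\|\geq1$ for every $e\in D$. So the exact quantitative obstruction of Theorem~\ref{thm:ai} survives passage to $\D/\G$, whereas the paper's argument only gives non-existence.
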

\begin{proof}
Since $\G\subseteq\I_2$, quotienting gives $\I_2/\G=D^{[2]}$ and the isometric identification $\D/\I_2=D/D^{[2]}$. The strict inclusions follow from Theorem~\ref{thm:square}. Any one-sided approximate identity in an algebra makes the closed linear span of products equal to the algebra. This is impossible here. The multiplication in the quotient by the closed square is zero by definition.
\end{proof}

\section{Finite measure spaces with an atomless part}\label{sec:transfer}

The obstruction in Theorem~\ref{thm:ai} transfers through a complemented copy of $L^1[0,1]$. No separability of the ambient measure algebra is needed.

\begin{theorem}\label{thm:finite}
Let $(\Omega,\Sigma,\lambda)$ be a finite measure space whose measure algebra has a nonzero atomless part, and let $Z=L^1(\lambda)$. There is a norm-one $S\in\DP(Z)$ such that
\[
 \|S-ST\|\geq1,\qquad \|S-TS\|\geq1
 \qquad(T\in\DP(Z)).
\]
In particular, $\DP(Z)$ has neither a right nor a left approximate identity.
\end{theorem}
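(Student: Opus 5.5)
The proof will transfer Theorem~\ref{thm:ai} through a contractively complemented isometric copy of $X=L^1(\T,m)$ inside $Z$.

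\emph{Step 1: the copy of $X$.} Since the measure algebra has a nonzero atomless part, there is $\Omega_0\in\Sigma$ with $\lambda(\Omega_0)>0$ and $\lambda|_{\Omega_0}$ atomless. Using only atomlessness, we build a measure-preserving map $\phi:\Omega_0\to\T$ (after rescaling $\lambda|_{\Omega_0}$ to a probability). Repeated bisection gives a dyadic tree of sets of measures $2^{-n}\lambda(\Omega_0)$, and a limit argument defines $\phi$ with $\phi_*(\lambda|_{\Omega_0}/\lambda(\Omega_0))=m$. No separability is needed, since only countably many sets are chosen. Define
\[
 Vf=\lambda(\Omega_0)^{-1}(f\circ\phi)1_{\Omega_0}:X\to Z,
\]
an isometry, and let $Q:Z\to X$ be the conditional expectation onto $\sigma(\phi)$ on $\Omega_0$, transported to $\T$. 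Then $Q$ is positive and contractive with $QV=\Id_X$, so $VQ$ is a contractive projection.

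\emph{Step 2: the operator $S$.} Put $S=VRQ$, with $R=C_\mu$ from Theorem~\ref{thm:ai}. Then $\|S\|\le1$, and $QSV=R$ gives $\|S\|=1$. Since $\DP(Z)$ is an ideal and $R\in\D$, we have $S\in\DP(Z)$.

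\emph{Step 3: the estimate.} For $T\in\DP(Z)$, let $T'=QTV$. Then $T'\in\D$, because $V$ and $Q$ are bounded and DP operators form an ideal. We compute
\[
 Q(S-ST)V=R-RQTV=R-RT',
\]
and
\[
 Q(S-TS)V=R-QTVR=R-T'R.
\]
The first identity uses $QV=\Id_X$ together with $SV=VR$ and $QS=RQ$. Since $\|Q\|,\|V\|\le1$, Theorem~\ref{thm:ai} yields
\[
 \|S-ST\|\ge\|R-RT'\|\ge1,\qquad \|S-TS\|\ge\|R-T'R\|\ge1.
\]
The consequence about approximate identities follows exactly as in Theorem~\ref{thm:ai}.

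\emph{Main obstacle.} The only delicate point is Step~1 in a non-separable measure algebra: producing a measure-preserving $\phi$ together with the conditional expectation $Q$ so that $QV=\Id$ holds exactly. The plan is to build a countably generated sub-$\sigma$-algebra of $\Sigma|_{\Omega_0}$ that is isomorphic, via the dyadic tree, to the Borel algebra of $\T$ modulo null sets. Then $V$ is the induced isometry and $Q$ is the conditional expectation onto this sub-$\sigma$-algebra, which exists in any finite measure space by Radon--Nikodym.
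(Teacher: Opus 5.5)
Your proposal is correct and follows essentially the same route as the paper: a contractively complemented isometric copy of $L^1$ built by dyadic bisection of an atomless component, $S$ obtained by compressing the operator $R$ of Theorem~\ref{thm:ai}, and the identities $Q(S-ST)V=R-R\,QTV$ and $Q(S-TS)V=R-QTV\,R$ combined with contractivity of $Q$ and $V$. The only cosmetic difference is that you realize the copy through an explicit measure-preserving point map $\phi$ into $\T$, whereas the paper works directly with the countably generated atomless subalgebra and its conditional expectation.
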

\begin{proof}
Choose an atomless measurable component of positive measure. Repeated equal-measure bisection produces a countably generated atomless subalgebra of its measure algebra. Conditional expectation onto this subalgebra, together with extension by zero from the component, yields a contractively complemented isometric copy $Y$ of $L^1[0,1]$ in $Z$. More explicitly, there are contractions $J:Y\to Z$ and $Q:Z\to Y$ with $QJ=\Id_Y$ and $J$ isometric; normalization of the restricted finite measure is absorbed in this isometry.

Take the operator $R$ from Theorem~\ref{thm:ai} on $Y$ and put $S=JRQ$. The ideal property gives $S\in\DP(Z)$. Since $QSJ=R$, one has $\|S\|=1$. For $T\in\DP(Z)$, put $F=QTJ\in\DP(Y)$. Then
\[
 Q(S-ST)J=R-RF,\qquad Q(S-TS)J=R-FR.
\]
Contractivity and Theorem~\ref{thm:ai} give the stated inequalities.
\end{proof}

\begin{remark}
If the finite measure algebra is purely atomic, its $L^1$ space is an $\ell^1$ space and has the Schur property. Its identity operator is therefore Dunford--Pettis. Thus the presence of an atomless part is precisely the obstruction to a one-sided approximate identity for the Dunford--Pettis ideal in this finite-measure setting. Theorem~\ref{thm:finite} does not assert that the square-ideal separation transfers through the same argument: compression of a product introduces intermediate operators on the whole ambient space.
\end{remark}

\section{Quantitatively norming power ideals}\label{sec:norming}

The properness of a closed square does not prevent it from controlling multiplication on an ambient quotient. We isolate the relevant abstract fact. For a closed two-sided ideal $J$ of a Banach algebra $A$, put
\[
 J^{[n]}=\clspan\{x_1\cdots x_n:x_i\in J\},\qquad n\geq1,
\]
and let $B_J$ denote its closed unit ball in the inherited norm.

\begin{theorem}\label{thm:norming}
Suppose $\kappa_r,\kappa_\ell>0$ satisfy, for all $a\in A$,
\begin{equation}\label{eq:detector}
 \kappa_r\|a\|\leq\sup_{x\in B_J}\|ax\|,\qquad
 \kappa_\ell\|a\|\leq\sup_{x\in B_J}\|xa\|.
\end{equation}
Then for every $n\geq1$,
\begin{equation}\label{eq:power-norming}
 \kappa_r^n\|a\|\leq\sup_{x\in B_{J^{[n]}}}\|ax\|\leq\|a\|,
 \qquad
 \kappa_\ell^n\|a\|\leq\sup_{x\in B_{J^{[n]}}}\|xa\|\leq\|a\|.
\end{equation}
In particular, every $J^{[n]}$ is essential on both sides, and left multiplication represents $A$ as a closed subalgebra of $\B(J^{[n]})$. Right multiplication gives the analogous representation of the opposite algebra $A^{\mathrm{op}}$.

If a bounded homomorphism $\Phi:A\to B$ satisfies
\[
 \|\Phi(x)\|\geq\eta\|x\|\qquad(x\in J^{[n]})
\]
for some $\eta>0$, and $A\ne\{0\}$, then
\begin{equation}\label{eq:transfer-lower}
 \|\Phi(a)\|\geq\frac{\eta\kappa_r^n}{\|\Phi\|}\|a\|
 \qquad(a\in A).
\end{equation}
\end{theorem}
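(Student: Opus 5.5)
The plan is an induction on $n$ for the lower bound in \eqref{eq:power-norming}; the upper bound is immediate. For the upper bound, $B_{J^{[n]}}\subseteq B_A$, so $\|ax\|\leq\|a\|\|x\|\leq\|a\|$. The case $n=1$ is exactly \eqref{eq:detector}, since $J^{[1]}=J$. For the inductive step, fix $a\in A$ and $\varepsilon>0$. Apply the hypothesis for $J^{[n]}$ to choose $y\in B_{J^{[n]}}$ with $\|ay\|\geq(\kappa_r^n-\varepsilon)\|a\|$. Then apply \eqref{eq:detector} to the element $ay\in A$ to get $x\in B_J$ with $\|ayx\|\geq(\kappa_r-\varepsilon)\|ay\|$.

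The point to check is that $yx\in B_{J^{[n+1]}}$. Submultiplicativity gives $\|yx\|\leq1$. Membership in $J^{[n+1]}$ holds because $y$ is a norm limit of linear combinations of $n$-fold products from $J$. Multiplying on the right by $x\in J$ turns each such combination into a combination of $(n+1)$-fold products, and right multiplication is continuous. Letting $\varepsilon\to0$ gives $\kappa_r^{n+1}\|a\|\leq\sup_{z\in B_{J^{[n+1]}}}\|az\|$. The left-hand estimate is symmetric: multiply $x$ on the left, using $\|x y a\|$, and note $xy\in J^{[n+1]}$.

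The consequences then follow. Two-sided essentiality means that $aJ^{[n]}=0$ or $J^{[n]}a=0$ forces $a=0$, and both are immediate from the bounds. The left regular map $L:A\to\B(J^{[n]})$, $a\mapsto(x\mapsto ax)$, is well defined because $J^{[n]}$ is an ideal, and it is a homomorphism. By \eqref{eq:power-norming} it satisfies $\kappa_r^n\|a\|\leq\|L_a\|\leq\|a\|$, so it is bounded below and its image is complete, hence closed. For the opposite algebra, right multiplication $a\mapsto(x\mapsto xa)$ is an antihomomorphism, i.e.\ a homomorphism of $A^{\mathrm{op}}$, and the left estimate gives the same conclusion.

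For \eqref{eq:transfer-lower}, note first that $A\neq\{0\}$ together with the bounds forces $J^{[n]}\neq\{0\}$, and then $\|\Phi\|>0$ because $\Phi$ is bounded below on $J^{[n]}$. Take $a\in A$ and $x\in B_{J^{[n]}}$. Since $ax\in J^{[n]}$,
\[
 \eta\|ax\|\leq\|\Phi(ax)\|=\|\Phi(a)\Phi(x)\|\leq\|\Phi(a)\|\|\Phi\|.
\]
Taking the supremum over $x$ and using \eqref{eq:power-norming} gives $\eta\kappa_r^n\|a\|\leq\|\Phi\|\|\Phi(a)\|$. No step is a real obstacle; the only point needing care is the closure argument showing $yx\in J^{[n+1]}$ when $y$ lies only in the closed span.
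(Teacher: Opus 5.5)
Your proposal is correct and follows the paper's proof essentially step for step: the same induction using $yx\in B_{J^{[n+1]}}$ for $y\in B_{J^{[n]}}$, $x\in B_J$; the same bounded-below argument for the left and right regular representations; and the same chain $\eta\|ax\|\leq\|\Phi(a)\|\|\Phi\|$ for the transfer estimate. Your check that $\|\Phi\|>0$ is a useful detail the paper leaves implicit. The one divergence is that the paper reads ``essential'' as ``meets every nonzero one-sided ideal''. This follows in one line from your vanishing-annihilator formulation: for $0\ne a$ in a right ideal $I$, any $x\in J^{[n]}$ with $ax\ne0$ gives $0\ne ax\in I\cap J^{[n]}$, and left ideals are handled symmetrically.
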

\begin{proof}
Induct on $n$. Apply the right estimate in \eqref{eq:detector} to each $ax$, with $x\in B_{J^{[n]}}$, and then take suprema. Since $xy\in B_{J^{[n+1]}}$ whenever $y\in B_J$, this gives the next right estimate. The left estimate follows by the same induction, and the upper bounds follow from submultiplicativity.

The norm of left multiplication by $a$ on $J^{[n]}$ lies between $\kappa_r^n\|a\|$ and $\|a\|$. Its representation is therefore injective and bounded below, hence has closed range. Right multiplication reverses products, which accounts for the opposite algebra. If $I$ is a nonzero right ideal and $0\ne a\in I$, the right estimate produces $x\in J^{[n]}$ with $0\ne ax\in I\cap J^{[n]}$. The left-ideal assertion is symmetric. In fact, $x$ may be chosen with $\|x\|\leq1$ and $\|ax\|\geq\kappa_r^n\|a\|-\varepsilon$ for any $\varepsilon>0$.

Finally,
\[
 \eta\kappa_r^n\|a\|
 \leq\sup_{x\in B_{J^{[n]}}}\|\Phi(ax)\|
 \leq\|\Phi(a)\|\|\Phi\|,
\]
which proves \eqref{eq:transfer-lower}.
\end{proof}

\begin{lemma}\label{lem:ball}
Let $G\subseteq P$ be closed two-sided ideals in $A_0$, and let $q:A_0\to A_0/G$ be the quotient map. For $t\in A_0$,
\[
 \sup_{p\in B_P}\|q(tp)\|
 =\sup_{d\in B_{P/G}}\|q(t)d\|.
\]
The corresponding equality with left multiplication also holds.
\end{lemma}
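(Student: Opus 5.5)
We prove the equality by establishing the two inequalities separately. Throughout, the quotient norm of $P/G$ is the one inherited from $A_0/G$. The left-multiplication version will follow by the same argument with the order of products reversed.

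For ``$\leq$'', fix $p\in B_P$. Since $G\subseteq P$, the class $d=q(p)$ lies in $P/G$. It satisfies $\|d\|\leq\|p\|\leq1$, because the quotient norm of a class is at most the norm of any representative. As $q$ is a homomorphism, $q(tp)=q(t)q(p)=q(t)d$. Hence each term on the left-hand side is dominated by a term on the right-hand side.

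For ``$\geq$'', fix $d\in B_{P/G}$ with $\|d\|<1$ and $\varepsilon>0$. By the definition of the quotient norm, choose a representative $p\in P$ with $q(p)=d$ and $\|p\|<1$. This is possible because
\[
 \|d\|=\inf\{\|p\|:p\in P,\ q(p)=d\}<1,
\]
where the infimum may be taken over $p\in P$: the coset $p+G$ lies in $P$ since $G\subseteq P$. Then $p\in B_P$ and $q(tp)=q(t)d$. So the left-hand supremum dominates $\|q(t)d\|$ for every $d$ in the open unit ball of $P/G$.

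It remains to handle classes of norm exactly one. Left multiplication by $q(t)$ is continuous, so the supremum over the closed unit ball equals the supremum over the open unit ball; alternatively, apply the previous paragraph to $(1-\varepsilon)d$ and let $\varepsilon\to0$. This boundary issue is the only subtle point, since representatives of norm exactly $\|d\|$ need not exist; scaling by $1-\varepsilon$ removes it. The same reasoning with $pt$ and $d\,q(t)$ in place of $tp$ and $q(t)d$ gives the left-multiplication equality.
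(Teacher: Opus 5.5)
Your proof is correct and follows essentially the same route as the paper: you get $\leq$ from $q(B_P)\subseteq B_{P/G}$, and the reverse inequality by choosing a near-optimal representative in $P$ and rescaling by a factor close to $1$. The only cosmetic difference is that you scale the class $d$ by $1-\varepsilon$ first, whereas the paper takes a representative of norm at most $1+\varepsilon$ and divides it by $1+\varepsilon$.
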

\begin{proof}
One inequality follows from $q(B_P)\subseteq B_{P/G}$. Given $d\in B_{P/G}$ and $\varepsilon>0$, choose a representative $p\in P$ with $\|p\|\leq1+\varepsilon$. Then $p/(1+\varepsilon)\in B_P$, giving the reverse inequality after $\varepsilon\downarrow0$.
\end{proof}

\begin{corollary}\label{cor:conditional}
Suppose the operator estimates
\[
 \kappa_r\dist(T,\G)\leq
 \sup_{S\in B_\D}\dist(TS,\G),\qquad
 \kappa_\ell\dist(T,\G)\leq
 \sup_{S\in B_\D}\dist(ST,\G)
\]
hold for every $T\in\B(X)$. Then every $D^{[n]}$, for $D=\D/\G\subseteq A=\B(X)/\G$, satisfies \eqref{eq:power-norming}. In particular, the proper ideal $D^{[2]}$ of Theorem~\ref{thm:square} quantitatively norms $A$ on both sides.
\end{corollary}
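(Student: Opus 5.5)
The plan is to translate the operator estimates into the detector inequalities \eqref{eq:detector} for the Banach algebra $A=\B(X)/\G$ with $J=D=\D/\G$, and then invoke Theorem~\ref{thm:norming} directly. Three preliminary facts need checking. First, $\G$ is a closed two-sided ideal of $\B(X)$, so $A$ is a Banach algebra with the quotient norm. Second, $\G\subseteq\D$, so $D$ is a closed two-sided ideal of $A$. Its inherited norm is the quotient norm $\|q(S)\|=\dist(S,\G)$, where $q:\B(X)\to A$ is the quotient map.

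Next I rewrite both sides of the hypotheses. For $a=q(T)$ we have $\|a\|=\dist(T,\G)$. Apply Lemma~\ref{lem:ball} with $A_0=\B(X)$, ideals $G=\G\subseteq P=\D$, and $t=T$. It gives
\[
 \sup_{S\in B_\D}\dist(TS,\G)=\sup_{S\in B_\D}\|q(TS)\|=\sup_{d\in B_{D}}\|q(T)d\|=\sup_{x\in B_J}\|ax\|.
\]
The left-multiplication version of Lemma~\ref{lem:ball} gives the analogous identity with $\dist(ST,\G)$. So the two assumed operator estimates are exactly the right and left inequalities in \eqref{eq:detector} for every $a\in A$, since $q$ is surjective.

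Theorem~\ref{thm:norming} then yields \eqref{eq:power-norming} for every $J^{[n]}=D^{[n]}$. The upper bounds follow from submultiplicativity. For $n=2$, the corollary's final clause needs two further points. $D^{[2]}=\I_2/\G$ is proper, which is Corollary~\ref{cor:quotient} (equivalently \eqref{eq:strict}). The estimate \eqref{eq:power-norming} with $n=2$ is the asserted two-sided quantitative norming of $A$.

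The only delicate point I expect is the identification of the closed unit ball of $D$ in its inherited norm with $q(B_\D)$ up to closure and scaling. This is exactly what Lemma~\ref{lem:ball} handles through the $(1+\varepsilon)$-lifting. I would also confirm that $J^{[n]}$ formed inside $A$ coincides with the $D^{[n]}$ defined earlier, and both are the closed span in $A$ of $n$-fold products from $D$. With these checks the corollary is a direct specialization, and no further analysis is needed.
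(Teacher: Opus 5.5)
Your proposal is correct and matches the paper's proof. The paper's proof is the one-line combination of Lemma~\ref{lem:ball}, which turns the operator estimates into \eqref{eq:detector} for $J=D$ inside $A=\B(X)/\G$, followed by Theorem~\ref{thm:norming}. Your additional checks are exactly the details that one line leaves implicit: that the inherited norm on $D$ is $\dist(\cdot,\G)$, that $q$ is surjective, that $J^{[n]}=D^{[n]}$, and that $D^{[2]}$ is proper by Corollary~\ref{cor:quotient}.
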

\begin{proof}
Apply Lemma~\ref{lem:ball} and Theorem~\ref{thm:norming}.
\end{proof}

\begin{remark}
Corollary~\ref{cor:conditional} is expressly conditional on the displayed estimates; their proof is not part of this paper. Its right square-ideal constant is $\kappa_r^2$, hence $1/9$ if $\kappa_r=1/3$. Qualitative detection already gives algebraic faithfulness and essentiality of all powers. Quantitative estimates additionally give bounded-below representations and the transfer of norm lower bounds in \eqref{eq:transfer-lower}.
\end{remark}

\section{Further questions}

The construction gives $D^{[n]}\ne\{0\}$ for every $n$, because the same Rajchman probability has singular convolution powers of every order. It does not prove that
\[
 D\supsetneq D^{[2]}\supseteq D^{[3]}\supseteq\cdots
\]
is strictly decreasing beyond its first inclusion. Nor does the existence of the intermediate ideal classify the closed ideals between $\G$ and $\D$.

Another question is whether the exact square-ideal separation extends to every nonseparable finite atomless $L^1$ space. The complemented-copy argument proves Theorem~\ref{thm:finite} for all such spaces but does not establish that stronger assertion.

Finally, a Banach algebra is called incompressible if every bounded injective homomorphism from it into a Banach algebra is bounded below. Under the right estimate of Corollary~\ref{cor:conditional}, incompressibility of one $D^{[n]}$ would imply incompressibility of $A$, by restricting a homomorphism and applying \eqref{eq:transfer-lower}. Neither incompressibility nor its failure is proved here.

\section*{Acknowledgment of AI assistance}
The author used ChatGPT (OpenAI) to assist with drafting and revising the manuscript, literature searches, the exploration and critical examination of mathematical arguments, and LaTeX preparation. The author takes responsibility for the content, including the mathematical claims and references.

\end{document}